\theoremstyle{plain}
\newtheorem{thm}{Theorem}[section]
\newtheorem{lem}[thm]{Lemma}
\newtheorem{prop}[thm]{Proposition}
\newtheorem{cor}[thm]{Corollary}
\newtheorem{defn}[thm]{Definition}
\newtheorem{ex}[thm]{Example}
\def\ds{\displaystyle}
\def\cal{\mathcal}
\def\dd {{\rm d}}
\def\dmax{{\rm d}_{\rm max}}
\def\adj{{\rm adj}}
\def\ord{{\rm ord}}
\DeclareMathOperator{\N}{\mathbb N}
\DeclareMathOperator{\dx}{d_{\max}}
\DeclareMathOperator{\Ap}{Ap}
\DeclareMathOperator{\maxap}{maxAp} 
\newcommand{\pre}{\preceq}
\newcommand{\st}{:}
\def\red#1 {\textcolor{red}{#1 }}
\def\blue#1 {\textcolor{blue}{#1 }}
\begin{document}

\title{The Maximal Denumerant of a Numerical Semigroup}
\author{Lance Bryant and James Hamblin}
\date{}
\maketitle

\begin{abstract}
\noindent Given a numerical semigroup $S = \langle a_0, a_1, a_2, \ldots, a_{t} \rangle$ and $n\in S$, we consider the factorization $n = c_0 a_0 + c_1 a_1 + \cdots + c_{t} a_{t}$ where $c_i\ge0$.  Such a factorization is {\em maximal} if $\sum c_i$ is a maximum over all such factorizations of $n$.  We provide an algorithm for computing the maximum number of maximal factorizations possible for an element in $S$, which is called the {\em maximal denumerant} of $S$.  We also consider various cases that have connections to the Cohen-Macualay and Gorenstein properties of associated graded rings for which this algorithm simplifies. 
\end{abstract}

\section{Introduction}

Let $\N$ denote the nonnegative integers. A {\em numerical semigroup} $S$ is a subsemigroup of $\N$ that contains 0 and has a finite complement in $\N$. For two elements $u$ and $u'$ in $S$, write $u \pre u'$ if there exists an $s\in S$ such that $u+s = u'$. This defines a partial ordering on $S$. The minimal elements in $S\setminus\{0\}$ with respect to this ordering form the unique {\em minimal set of generators of $S$}, which is denoted by $\{a_0,a_1,a_2,\dots, a_{t}\}$ where $a_0<a_1<a_2<\dots<a_{t}$.  

The numerical semigroup $S=\{\sum_{i=0}^{t} c_ia_i : c_i\ge 0\}$ is represented using the notation $S = \langle a_0, a_1, a_2, \ldots, a_{t} \rangle$. Since the minimal generators of $S$ are distinct modulo $a_0$, the set of minimal generators is finite. Furthermore, $S$ having finite complement in $\N$ is equivalent to $\gcd\left(a_0,a_1,\ldots ,a_{t}\right)= 1$.  The cardinality, $t+1$, of the set of minimal generators of a semigroup $S$ is called the {\em embedding dimension of $S$}, written $\nu=\nu(S)$. The element $a_0$ is called the {\em multiplicity} of $S$, also sometimes written $e=e(S)$. When $S\ne \N$, we have $2\le\nu\le e$.  %When $\nu(S) = e(S)$, we say that $S$ has {\em maximal embedding dimension}.

By definition, if $n\in S=\langle a_0,a_1,\dots,a_t\rangle$, then there exists a $(t+1)$-tuple of nonnegative integers ${\bf c}=(c_0,c_1,\ldots,c_{t})$ such that $\sum c_i a_i = n$.  We call $\bf c$ a {\em factorization} of $n$ in $S$, or simply an $S$-factorization of $n$.  The {\em length} of $\bf c$, written $|{\bf c}|$, is $\sum c_i$.  We say that ${\bf c}$ is {\em maximal} if no $S$-factorization of $n$ has length greater than $|{\bf c}|$, and {\em minimal} if no $S$-factorization of $n$ has length less then $|{\bf c}|$. For $n \in S$, the {\em order} of $n$, written $\ord(n;S)$ or $\ord(n)$, is the length of a maximal $S$-factorization of $n$. Similarly, the length of a minimal $S$-factorization of $n$ is denoted by $\min\ord(n;S)$.

Provided $S\ne \N$, it is a finitely generated monoid which fails to have unique factorization with respect to its minimal generators. For a given element of $s\in S$, a basic arithmetic constant that measures this failure is the cardinality of the set of factorizations of $s$. This is called the {\em denumerant} of $s$ in $S$, denoted by $d(s;S)$. See \cite{R} for an exhaustive view of related results. This is one of several numerical invariants that have appeared in recent papers exploring the factorization properties of numerical semigroups; for example, see \cite{ACHP, AG1, AG2, BCKR, CDHK, CHK, CGLM}.

In \cite{BHJ}, the authors considered a variation of the denumerant. 

\begin{defn}
The {\em maximal denumerant} of an element $s$ in $S$ is the number of factorizations of $s$ that have maximal length and is denoted by $\dd_{\max}(s; S)$. The {\em maximal denumerant of the semigroup $S$} is defined to be $\dmax(S)=\max_{s\in S}\{\dd_{\max}(s; S)\}.$
\end{defn} 

It was shown in \cite{BHJ} that the maximal denumerant of a semigroup $S$ is always finite, and the following formulas were given for this value when the embedding dimension of $S$ is less than four: Let $S$ be (perhaps non-minimally) generated by $a_1$, $a_2$, and $a_3$, and let $g=\gcd(a_2-a_1,a_3-a_1)$, $m=(a_2-a_1)/g$, and $n=(a_3-a_1)/g$. Then we can write
$$S=\langle a_1,a_1+ gm,a_1+ gn\rangle.$$
If $0\le \alpha < mn$ such that $\alpha \equiv -a_1  \,\,\mathrm{mod} \,\,mn$, we have
\begin{eqnarray}\label{eqn1}
\dx(S) &=& \begin{cases} \ds
\left\lceil\frac{a_1}{mn}\right\rceil, & \ds  \text{if } \alpha \in  \langle m,n\rangle \\ \\ \ds
\left\lceil\frac{a_1}{mn}\right\rceil +1, & \text{otherwise.}
\end{cases}
\end{eqnarray}

\noindent Moreover, if $x$ and $y$ are integers such that $mx+ny=a_1$, we have
\begin{eqnarray}
\ds \dd_{\max}(S) &=& \left\lceil \frac{x}{n}\right\rceil + \left\lceil \frac{y}{m} \right\rceil.
\end{eqnarray}

It is not clear how these formulas might extend to semigroups with higher embedding dimension, and in this paper we are concerned with computing the maximal denumerant of an arbitrarily given semigroup. Notice that the formulas above make use of the values $a_1$, $a_2-a_1$, and $a_3-a_1$. These integers generate the {\em blowup semigroup} of $S$, see \cite{BDF}. This suggests that the blowup semigroup may be useful for computing the maximal denumerant in general. In Section 3, we show that this is indeed the case. In essence, the problem of counting maximal factorizations in a semigroup $S$ corresponds to counting factorizations with bounded length in the blowup semigroup, and Theorem \ref{th:lowerbound} is the crucial result.

Section 4 is concerned with a special class of semigroups, namely additive semigroups (also called good \cite{Ba} and $M$-additive \cite{Bry2}). These are the semigroups for which the associated graded ring with respect to the maximal ideal of the the corresponding local semigroup ring is Cohen-Macaulay, see \cite{Ba,BF, BF2,Bry,Bry2,DMS,Gar}. Supersymmetric semigroups (also called $M$-symmetric \cite{Bry}) are important examples of additive semigroups since the associated graded ring is Gorenstein in this case, see \cite{Bry,Bry2,DMS,DMS2}. The method for computing the maximal denumerant outlined in Section 3 simplifies when the semigroup is additive, and especially when the semigroup is supersymmetric. 

In the next section, we begin by establishing the basic connection between factorizations in a semigroup and its blowup that will be used throughout the paper.

\section{Factorizations and the blowup semigroup}

Let $S=\langle e, a_1,a_2,\dots, a_t\rangle$ be a numerical semigroup with mutiplicity $e$ and embedding dimension $\nu = t+1$, where $e<a_1< a_2 < \cdots<a_t$. As noted in the introduction, the $a_i$ are distinct modulo $e$. They are, in fact, the least elements of $S$ in their respective congruence classes. The set consisting of the least elements in $S$ for each congruence class modulo $e$ is called {\em the Ap\'ery set} of $S$. More generally, an {\em Ap\'ery set} with respect to an element $u\in S$ is defined to be $\Ap(S; u) = \{w\in S \st w-u\not\in S\}$. Additionally, we denote the elements of $S$ congruent to $i$ modulo $e$ by $S_i$, so that $\Ap(S; e) = \{\min(S_i) \st 0\le i < e\}$.

For our study of the maximal denumerant of $S$, we consider the blowup of $S$, sometimes called the {\em Lipman} semigroup of $S$ in honor of \cite{L}.  

\begin{defn}
Given a numerical semigroup $S = \langle e, a_1, a_2, \ldots, a_t \rangle$, the {\em blowup} of $S$ is the semigroup $B = \langle e, a_1-e, a_2-e, \ldots, a_t-e \rangle$.  \end{defn}

We will write $d_i = a_i - e$, so that $B = \langle e, d_1, d_2, \ldots, d_t \rangle$,  and set ${\cal D} = \{ e, d_1, d_2, \ldots, d_t \}$.

Note that the element $e$ is not necessarily the multiplicity of $B$, and $\nu=t+1$ is not necessarily the embedding dimenstion of $B$. For example, if we have $S=\langle 4,5,6 \rangle$ with multiplicity 4 and embedding dimesnion 3, then $B=\langle 4, 1, 2\rangle =  \N$ with multiplicity 1 and embedding dimension 1. In general, we have $e(B)\le e(S)$ and $\nu(B)\le \nu(S)$. Thus $\cal D$ is always a generating set of $B$, but not necessarily minimal. 

We wish to consider $B$ with respect to the generating set $\mathcal D$, so we say that $(x_0,x_1,\ldots,x_t)$ is a $B^{\cal D}$-factorization of $b\in B$ if $x_0e+\sum_{i=1}^t x_i d_i = b$.
For example, $B=\mathbb N$ is the blowup of  $S=\langle 4,5,6 \rangle$ and $\mathcal D = \{4,1,2\}$. Thus, $(0,3,1)$ is a $B^{\mathcal D}$-factorization of $5$ since $5=0(4) + 3(1) + 1(2)$, and $d(5;B^{\mathcal D}) = 4$ is the denumerant of $5$ in $B$ with respect to $\cal D$ since there are a total of four $B^{\mathcal D}$-factorizations of $5$.

\begin{lem} \label{th:biglemma}
With the definitions above, let $s \in S$ and $r \in \N$.
\begin{enumerate}
\item If ${\bf x} = (x_0,x_1,\ldots,x_{t})$ is an $S$-factorization of $s$ with length $r$, then $(0,x_1,x_2,\ldots,x_{t})$ is a $B^{\cal D}$-factorization of $s-re$ with length at most $r$.
\item If ${\bf y} = (y_0,y_1,\ldots,y_{t})$ is a $B^{\cal D}$-factorization of $s-re$ with length at most $r$, then $(2y_0+r-|{\bf y}|,y_1,y_2,\ldots,y_{t})$ is an $S$-factorization of $s$ with length $r+y_0$.  In particular, $\ord(s;S) \geq r$.
\end{enumerate}
\end{lem}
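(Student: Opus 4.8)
The plan is to prove both parts by direct substitution, using the defining relation $d_i = a_i - e$ to pass between $S$-factorizations (written in the $a_i$) and $B^{\mathcal D}$-factorizations (written in the $d_i$). Everything reduces to bookkeeping with the two quantities being tracked, namely the value and the length; the only genuinely nontrivial point is verifying that the proposed first coordinate in part (2) is a nonnegative integer, and that is precisely where the hypothesis $|{\bf y}| \le r$ is needed.

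For part (1), I would start from $s = x_0 e + \sum_{i=1}^t x_i a_i$ with $\sum_{i=0}^t x_i = r$, and compute the $B^{\mathcal D}$-value of $(0,x_1,\ldots,x_t)$ as $\sum_{i=1}^t x_i d_i = \sum_{i=1}^t x_i(a_i - e) = \sum_{i=1}^t x_i a_i - e\sum_{i=1}^t x_i$. Substituting $\sum_{i=1}^t x_i a_i = s - x_0 e$ and $\sum_{i=1}^t x_i = r - x_0$ collapses this to $s - re$, so the tuple is a $B^{\mathcal D}$-factorization of $s-re$. Its length is $r - x_0 \le r$, which gives the claim (with equality exactly when $x_0 = 0$).

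For part (2), I would set $z_0 = 2y_0 + r - |{\bf y}|$ and first record that $z_0 \ge 2y_0 \ge 0$, since $|{\bf y}| \le r$; thus $(z_0,y_1,\ldots,y_t)$ is a legitimate $S$-factorization candidate. I would then compute its $S$-value $z_0 e + \sum_{i=1}^t y_i a_i$, rewrite each $a_i = d_i + e$, substitute $\sum_{i=1}^t y_i d_i = s - re - y_0 e$ from the $B^{\mathcal D}$-factorization equation, and simplify using $z_0 = r + y_0 - \sum_{i=1}^t y_i$; the $e$-terms cancel and leave exactly $s$. The length is $z_0 + \sum_{i=1}^t y_i = r + y_0$. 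Since this exhibits a bona fide $S$-factorization of $s$ of length $r + y_0 \ge r$, the order inequality $\ord(s;S) \ge r$ follows at once.

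I do not expect any serious obstacle, as both parts are algebraic verifications; the single place to be careful is the nonnegativity of $z_0$, which is the reason the hypothesis bounds $|{\bf y}|$ rather than fixing it exactly. The otherwise opaque coefficient $2y_0$ is not a free choice: the value equation forces the first coordinate to be $z_0 = r + y_0 - \sum_{i=1}^t y_i$, and rewriting $\sum_{i=1}^t y_i = |{\bf y}| - y_0$ is exactly what produces the $2y_0 + r - |{\bf y}|$ appearing in the statement.
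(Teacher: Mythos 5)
Your proposal is correct and follows essentially the same route as the paper's proof: both parts are handled by the same direct algebraic substitution via $d_i = a_i - e$, with the paper presenting part (2) as adding $ve$ and then $(r-v)e$ to the factorization equation, which is the same computation you perform in one step. Your explicit check that $z_0 = 2y_0 + r - |{\bf y}| \geq 0$ makes visible a point the paper handles only implicitly through the observation $v \leq r$, but this is a presentational difference, not a different argument.
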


\begin{proof} For the proof of {\it 1.}, let ${\bf x}=(x_0, x_1, \ldots, x_{t})$ be an $S$-factorization of $s$ with length $r$.  Then 
\begin{eqnarray*}
s & = & x_0 e + x_1 a_1 + \cdots + x_{t} a_{t} \\
s-re & = & (x_0 e - x_0 e) + (x_1 a_1 - x_1 e) + \cdots + (x_{t}a_{t} - x_{t}e) \\
s-re & = & 0 e + x_1 d_1 + \cdots + x_{t} d_{t}
\end{eqnarray*}
Hence $(0,x_1,x_2,\ldots,x_{t})$ is a $B^{\cal D}$-factorization of $s-re$ with length at most $r$.

For the proof of {\it 2.}, let ${\bf y}=(y_0,y_1,\ldots,y_{t})$ be a $B^{\cal D}$-factorization of $s-re$ with $|{\bf y}|\leq r$.  Hence
\begin{equation} \label{eq:dpart}
s - re = y_0 e + y_1 d_1 + \cdots + y_{t} d_{t} .
\end{equation}
Let $v = |{\bf y}|-y_0$ and note that $v\leq r$.  Now add $ve$ to both sides of (\ref{eq:dpart}) to obtain
\begin{equation} \label{eq:dpart2}
s - re + ve = y_0 e + y_1 a_1 + \cdots + y_{t} a_{t}.
\end{equation}
Next, adding $(r-v)e$ to both sides of (\ref{eq:dpart2}) gives
\[
s = (y_0 + r - v)e + y_1 a_1 + y_2 a_2 + \cdots + y_{t} a_{t}.
\]
Since $y_0+r-v = 2y_0+r-|{\bf y}|$, this tells us that $(2y_0+r-|{\bf y}|,y_1,y_2,\ldots,y_{t})$ is an $S$-factorization of $s$ with length $y_0 + r - v + \sum_{i=1}^{t} y_i = y_0 + r$. 
\end{proof}

\section{The Maximal Denumerant}

Recall that if $s\in S$ has a maximal $S$-factorization ${\bf x}$, then $|{\bf x}| = \ord(s)$.  By Lemma \ref{th:biglemma} we have a corresponding $B^{\cal D}$-factorization of $s - \ord(s)e$.  This situation will occur frequently in this section, so we have the following definition.

\begin{defn}
Given $s\in S$, the {\em adjustment} of $s$ is $\adj(s)=s-\ord(s)e$. Morevoer, for $U\subset S$, we set $\adj(U) = \{ \adj(s) : s\in U \}$ and call this the {\em adjustment} of $U$. 
\end{defn}

Using Lemma \ref{th:biglemma}, we have a way to find $\dmax(s;S)$ via $B^{\mathcal D}$-factorizations. This is the content of Proposition \ref{th:dmaxchar}.

\begin{lem} \label{th:1stzero}
Let $s\in S$.  If $(x_0,x_1,\ldots,x_{t})$ is a $B^{\cal D}$-factorization of $\adj(s)$ with length at most $\ord(s)$, then $x_0 = 0$.
\end{lem}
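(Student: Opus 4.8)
The plan is to apply part 2 of Lemma \ref{th:biglemma} directly, taking $r = \ord(s)$. By hypothesis, $(x_0,x_1,\ldots,x_t)$ is a $B^{\cal D}$-factorization of $\adj(s) = s - \ord(s)e$ whose length is at most $\ord(s) = r$, which is exactly the situation required to invoke that lemma. First I would set $r = \ord(s)$ and observe that the factorization in question satisfies the two hypotheses of Lemma \ref{th:biglemma}.2 verbatim: it factors $s - re$ over $\cal D$ and has length $\le r$.

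Next I would feed this factorization into the construction of Lemma \ref{th:biglemma}.2. The lemma then produces an $S$-factorization of $s$, namely $(2x_0 + r - |{\bf x}|,\, x_1,\, x_2,\, \ldots,\, x_t)$, and crucially tells us its length is $r + x_0 = \ord(s) + x_0$.

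The decisive step is the appeal to maximality of the order. Since $\ord(s)$ is by definition the length of a \emph{maximal} $S$-factorization of $s$, no $S$-factorization of $s$ can have length exceeding $\ord(s)$. Comparing this bound with the length $\ord(s) + x_0$ just obtained gives $\ord(s) + x_0 \le \ord(s)$, hence $x_0 \le 0$. Because $x_0$ is a nonnegative integer (it is a coordinate of a $B^{\cal D}$-factorization), we conclude $x_0 = 0$, as desired.

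There is no genuine obstacle here; the argument is a short deduction from the machinery already established. The only point requiring care is the bookkeeping inherent in Lemma \ref{th:biglemma}.2 — specifically recognizing that the length of the resulting $S$-factorization is $\ord(s) + x_0$ rather than merely $\ord(s)$ — since it is precisely the surplus term $x_0$ that maximality of $\ord(s)$ forces to vanish.
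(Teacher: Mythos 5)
Your proposal is correct and is essentially identical to the paper's own proof: both invoke statement \emph{2} of Lemma \ref{th:biglemma} with $r = \ord(s)$ to produce an $S$-factorization of $s$ of length $\ord(s) + x_0$, and then conclude $x_0 = 0$ from the maximality defining $\ord(s)$. Your write-up merely spells out the bookkeeping that the paper leaves implicit.
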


\begin{proof}
By statement {\it 2.}\ of Lemma \ref{th:biglemma}, we see that $s$ has an $S$-factorization of length $x_0 + \ord(s)$.  By definition, this implies $x_0 = 0$.
\end{proof}

\begin{prop} \label{th:dmaxchar}
If $s \in S$, then $\dmax(s;S)$ is the number of $B^{\cal D}$-factorizations of $\adj(s)$ of length at most $\ord(s)$.
\end{prop}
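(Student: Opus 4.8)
The plan is to establish a bijection between the set of maximal $S$-factorizations of $s$ and the set of $B^{\cal D}$-factorizations of $\adj(s)$ of length at most $\ord(s)$, using the two constructions from Lemma \ref{th:biglemma} as the maps in each direction. Since $\dmax(s;S)$ is by definition the cardinality of the first set, producing such a bijection yields the stated equality. The two maps are already written down: statement \emph{1.} sends an $S$-factorization ${\bf x}=(x_0,x_1,\ldots,x_t)$ of $s$ to $(0,x_1,\ldots,x_t)$, and statement \emph{2.} sends a $B^{\cal D}$-factorization ${\bf y}$ of $\adj(s)=s-\ord(s)e$ (with length at most $\ord(s)$) to $(2y_0+\ord(s)-|{\bf y}|,y_1,\ldots,y_t)$, taking $r=\ord(s)$ throughout.

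First I would observe that the forward map is well-defined on maximal factorizations: if ${\bf x}$ is maximal then $|{\bf x}|=\ord(s)$, so by statement \emph{1.} with $r=\ord(s)$ the image $(0,x_1,\ldots,x_t)$ is a $B^{\cal D}$-factorization of $\adj(s)$ of length $\sum_{i\ge1}x_i\le\ord(s)$, which lands in the target set. Next I would check that the backward map lands among maximal factorizations. Given such a ${\bf y}$, statement \emph{2.} produces an $S$-factorization of $s$ of length $\ord(s)+y_0$; since $\ord(s)$ is by definition the maximum possible length, we must have $y_0\le 0$, hence $y_0=0$ — this is precisely Lemma \ref{th:1stzero}. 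Therefore the image is $(\ord(s)-|{\bf y}|,y_1,\ldots,y_t)$ with length exactly $\ord(s)$, i.e.\ a maximal $S$-factorization.

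The remaining step is to verify that these two maps are mutually inverse. Composing forward-then-backward on a maximal ${\bf x}$: the image $(0,x_1,\ldots,x_t)$ has $y_0=0$ and length $|{\bf x}|-x_0=\ord(s)-x_0$, so applying statement \emph{2.} returns first coordinate $2\cdot0+\ord(s)-(\ord(s)-x_0)=x_0$ and the remaining coordinates unchanged, recovering ${\bf x}$. Composing backward-then-forward on a valid ${\bf y}$: by the previous paragraph $y_0=0$ forces the backward image to have first coordinate $\ord(s)-|{\bf y}|$, and applying statement \emph{1.} zeroes it out, returning $(0,y_1,\ldots,y_t)={\bf y}$. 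Both composites are the identity, so the correspondence is a bijection.

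The main subtlety — and really the crux of the argument — is the reliance on Lemma \ref{th:1stzero} to force $y_0=0$ for every admissible ${\bf y}$. Without it the backward map could move outside the set of maximal factorizations, and the two sets would merely inject one way rather than biject. Everything else is a matter of bookkeeping with the explicit formulas, substituting $r=\ord(s)$ and tracking how the length and first coordinate transform; I expect no genuine obstacle there, only the need to record the computations carefully so the round-trips visibly collapse to the identity.
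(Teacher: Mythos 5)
Your proposal is correct and takes essentially the same approach as the paper: both establish a bijection between maximal $S$-factorizations of $s$ and $B^{\cal D}$-factorizations of $\adj(s)$ of length at most $\ord(s)$ using the two constructions of Lemma \ref{th:biglemma} with $r=\ord(s)$, with Lemma \ref{th:1stzero} supplying the crucial fact that any admissible factorization has vanishing first coordinate. The only difference is presentational --- the paper fixes $x_0 = \ord(s) - \sum_{i=1}^t x_i$ and phrases the correspondence as a single ``if and only if,'' while you verify explicitly that the two composites are identities.
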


\begin{proof} Given a $t$-tuple $(x_1,\ldots,x_t)$, write $x_0 = \ord(s) - \sum_{i=1}^t x_i$.
It is sufficient to prove that $(x_0,x_1,\ldots,x_{t})$ is a maximal $S$-factorization of $s$ if and only if $(0,x_1,\ldots,x_{t})$ is a $B^{\cal D}$-factorization of $\adj(s)$ with length at most $\ord(s)$.

Let $(x_0,x_1,\ldots,x_{t})$ be a maximal $S$-factorization of $s$.  Then by statement {\it 1.}\ of Lemma \ref{th:biglemma} with $r = \ord(s)$, we see that $(0,x_1,\ldots,x_{t})$ is a $B^{\cal D}$-factorization of $\adj(s)$ with length at most $\ord(s)$.

Conversely, let $(0,x_1,\ldots,x_{t})$ be a $B^{\cal D}$-factorization of $\adj(s)$ with length at most $\ord(s)$.  Note that by Lemma \ref{th:1stzero}, any such factorization has a zero in the first component.  By statement {\it 2.}\ of Lemma \ref{th:biglemma}, since $\sum_{i=0}^{t} x_i = \ord(s)$, we see that $(x_0,x_1,\ldots,x_{t})$ is a maximal $S$-factorization of $s$.
\end{proof}

From Proposition \ref{th:dmaxchar} we know that  $\dmax(S)$ can be computed by considering the $B^{\cal D}$-factorizations of elements in $\adj(S)$. The rest of this section details how to carry out this computation.

\begin{prop} \label{th:nvsnplusa}
Let $f\in \Ap(B;e)$, $s= f + \min\ord(f;B^{\mathcal D})e$, and $s' = f + \ord(f;B^{\mathcal D})e$. Then $s$ and $s'$ are in $S$, and 
\begin{enumerate}
\item $\adj(s+ke)= f$, for $k\ge0$.
\item $\dmax(s'+ke) = d(f;B^{\mathcal D})$, for $k\ge0$.
%\item If $s \ge f + \min\ord(f;B^{\mathcal D})e$, then $\adj(s+e)=\adj(s)= f$.
%\item If $s \ge f + \ord(f;B^{\mathcal D})e$, then additionally $\dmax(s+e;S) = \dmax(s;S) = d(f;B^{\mathcal D})$
\end{enumerate}
\end{prop}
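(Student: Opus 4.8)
The plan is to deduce everything from a single computation of the $S$-order of elements of the form $f+me$. Fix $f\in\Ap(B;e)$ and abbreviate $\el=\min\ord(f;B^{\mathcal D})$ and $L=\ord(f;B^{\mathcal D})$, so that $s=f+\el e$ and $s'=f+Le$. First I would dispose of membership. A minimal $B^{\mathcal D}$-factorization ${\bf y}$ of $f$ has length $\el$; regarding it as a $B^{\mathcal D}$-factorization of $s-\el e=f$ of length at most $\el$, the construction in statement 2 of Lemma \ref{th:biglemma} produces an explicit $S$-factorization of $s$, so $s\in S$. Applying the same construction to a maximal $B^{\mathcal D}$-factorization of $f$ (of length $L$, viewed as a factorization of $s'-Le=f$) gives $s'\in S$.

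The heart of the argument is the identity
\[
\ord(f+me;S)=m\qquad\text{for every } m\ge\el.
\]
For the lower bound, a minimal $B^{\mathcal D}$-factorization of $f$ has length $\el\le m$, hence is a $B^{\mathcal D}$-factorization of $(f+me)-me=f$ of length at most $m$, and statement 2 of Lemma \ref{th:biglemma} then yields $\ord(f+me;S)\ge m$. For the upper bound, let $R=\ord(f+me;S)$ and take a maximal $S$-factorization ${\bf x}$ of $f+me$ of length $R$. By statement 1 of Lemma \ref{th:biglemma}, $(0,x_1,\dots,x_t)$ is a $B^{\mathcal D}$-factorization of $(f+me)-Re=f-(R-m)e$; in particular $f-(R-m)e\in B$. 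Here the Ap\'ery hypothesis is essential: since $e\in B$, if $f-je\in B$ for some $j\ge1$ then $f-je+(j-1)e=f-e\in B$, contradicting $f-e\notin B$; hence $f-je\notin B$ for every $j\ge1$. Thus $f-(R-m)e\in B$ forces $R-m\le0$, i.e.\ $R\le m$, and the identity follows.

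With the identity in hand both items are immediate. For item 1, take $m=\el+k\ge\el$ for $k\ge0$: then $s+ke=f+me$ and $\ord(s+ke;S)=m$, so $\adj(s+ke)=(f+me)-me=f$. For item 2, take $m=L+k$; since $L\ge\el$ we still have $m\ge\el$, so $\ord(s'+ke;S)=m$ and likewise $\adj(s'+ke)=f$. By Proposition \ref{th:dmaxchar}, $\dmax(s'+ke)$ is the number of $B^{\mathcal D}$-factorizations of $f=\adj(s'+ke)$ of length at most $\ord(s'+ke;S)=L+k$. Since $L=\ord(f;B^{\mathcal D})$ is the maximal length of any $B^{\mathcal D}$-factorization of $f$, this length restriction excludes nothing, so the count is the full denumerant $d(f;B^{\mathcal D})$.

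The only genuine obstacle is the upper bound in the displayed identity, and within it the step from $f-e\notin B$ to $f-je\notin B$ for all $j\ge1$; this is precisely what prevents $f+me$ from acquiring an $S$-factorization longer than $m$, and it is the one place where the hypothesis $f\in\Ap(B;e)$ is used. Everything else is bookkeeping with the two halves of Lemma \ref{th:biglemma} together with Proposition \ref{th:dmaxchar}.
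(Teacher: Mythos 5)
Your proof is correct and follows essentially the same route as the paper's: the lower bound on $\ord(f+me;S)$ comes from Lemma~\ref{th:biglemma}(2), the upper bound from Lemma~\ref{th:biglemma}(1) combined with $f\in\Ap(B;e)$, and item 2 is finished by Proposition~\ref{th:dmaxchar} once the length bound $\ord(f;B^{\mathcal D})+k$ is seen to exclude no factorizations. If anything, packaging both items into the single identity $\ord(f+me;S)=m$ for all $m\ge\min\ord(f;B^{\mathcal D})$ is slightly tidier than the paper's two-part argument, and your explicit step that $f-je\notin B$ for all $j\ge1$ (via adding $(j-1)e$) fills in a detail the paper checks only for $j=1$.
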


\begin{proof}
For {\it 1.}, by Lemma \ref{th:biglemma}, we have that $\ord(s+ke;S) \geq \min\ord(f;B^{\cal D})$.  Since $(s+ke)-(\min\ord(f;B^{\cal D})+k+1)e = f-e\notin B$, it follows by Lemma \ref{th:biglemma} that $s+ke$ does not have an $S$-factorization of length greater than $\min\ord(f;B^{\cal D})$, hence $\ord(s+ke;S) = \min\ord(f;B^{\cal D})$.  Hence $\adj(s+ke)=(s+ke)-(\min\ord(f;B^{\cal D})+k)e = f$.

For {\it 2.}, notice that $\adj(s') = f$ by {\it 1.} Using Proposition \ref{th:dmaxchar}, it suffices to show that $\ord(s'+ke;S) \ge \ord(f;B^{\mathcal D})$. From the proof of {\it 1.}, we know that $\ord(s'+ke;S) = \min\ord(f;B^{\mathcal D}) + k + (s'-s)/e = \min\ord(f;B^{\mathcal D}) + k +\ord(f;B^{\mathcal D}) -\min\ord(f;B^{\mathcal D}) = \ord(f;B^{\mathcal D}) + k $.
\end{proof}

\begin{cor} 
The adjustment of $S$ is finite, and $\Ap(B;e) \subset \adj(S) \subset B$.
\end{cor}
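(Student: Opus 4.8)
Two of the three assertions follow immediately from results already established, so the plan is to dispose of the inclusions first and reserve the real work for finiteness. For the containment $\adj(S)\subseteq B$, I would fix $s\in S$ and a maximal $S$-factorization ${\bf x}=(x_0,x_1,\ldots,x_t)$, whose length is $\ord(s)$ by definition. Applying statement {\it 1.}\ of Lemma \ref{th:biglemma} with $r=\ord(s)$ produces a $B^{\cal D}$-factorization $(0,x_1,\ldots,x_t)$ of $s-\ord(s)e=\adj(s)$, so $\adj(s)\in B$. For the containment $\Ap(B;e)\subseteq\adj(S)$, I would take $f\in\Ap(B;e)$ and set $s=f+\min\ord(f;B^{\mathcal D})e$; statement {\it 1.}\ of Proposition \ref{th:nvsnplusa} with $k=0$ then reads $\adj(s)=f$, exhibiting $f$ as the adjustment of an element of $S$.

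For finiteness the key is a monotonicity of the adjustment along each congruence class modulo $e$, namely $\adj(n+e)\le\adj(n)$ for every $n\in S$. To prove it I would first note $\ord(n+e)\ge\ord(n)+1$: appending a single copy of the generator $e$ to a maximal $S$-factorization of $n$ gives an $S$-factorization of $n+e$ of length $\ord(n)+1$. Subtracting $\ord(n+e)e$ then yields $\adj(n+e)=(n+e)-\ord(n+e)e\le (n+e)-(\ord(n)+1)e=\adj(n)$. Now fix a residue $i$ with $0\le i<e$ and let $w_i=\min(S_i)$ be the corresponding Ap\'ery element, so that the elements of $S$ congruent to $i$ modulo $e$ are exactly $w_i,w_i+e,w_i+2e,\ldots$. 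By the monotonicity just proved, $\adj(w_i)\ge\adj(w_i+e)\ge\adj(w_i+2e)\ge\cdots$, and by the inclusion $\adj(S)\subseteq B\subseteq\N$ these are nonnegative integers; hence the maximum of $\adj$ over this class is the initial value $\adj(w_i)$. Since there are only $e$ classes, $\adj(S)$ is contained in $\{0,1,\ldots,M\}$ with $M=\max_{w\in\Ap(S;e)}\adj(w)$, and a bounded subset of $\N$ is finite.

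The only genuinely substantive step is the monotonicity $\adj(n+e)\le\adj(n)$, and within it the order inequality $\ord(n+e)\ge\ord(n)+1$; I would be careful that this uses $e$ being an actual generator of $S$, so that lengthening a factorization by one copy of $e$ is legitimate, which holds by hypothesis. Everything else is bookkeeping built directly on Lemma \ref{th:biglemma} and Proposition \ref{th:nvsnplusa}. I should stress that the corollary asserts only the two containments and finiteness, which is precisely what the computation of $\dmax(S)$ in the rest of the section requires; in particular it does not claim $\adj(S)=\Ap(B;e)$, so I would resist the temptation to chase that stronger identity here.
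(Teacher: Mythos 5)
Your proof is correct, but it diverges from the paper's in two of the three assertions, and the comparison is instructive. For $\Ap(B;e)\subseteq\adj(S)$ you do exactly what the paper does: invoke statement \emph{1.}\ of Proposition \ref{th:nvsnplusa} with $k=0$. For $\adj(S)\subseteq B$, however, the paper simply quotes the characterization $B=\{s-ke : \ord(s;S)\ge k\ge 1\}$ from \cite{BDF}, whereas you derive the inclusion directly from statement \emph{1.}\ of Lemma \ref{th:biglemma} with $r=\ord(s)$; your route is self-contained and arguably preferable, since the needed $B^{\cal D}$-factorization of $\adj(s)$ is already on hand. For finiteness, the paper offers no explicit argument at all---it is implicit in Proposition \ref{th:nvsnplusa}(\emph{1.}), which shows that along each class $S_i$ the adjustment is constantly equal to the Ap\'ery element $f$ from $s=f+\min\ord(f;B^{\cal D})e$ onward, leaving only finitely many smaller elements per class. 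You instead prove the monotonicity $\adj(n+e)\le\adj(n)$ via $\ord(n+e)\ge\ord(n)+1$ (correctly flagging that this uses $e$ being a generator of $S$), and conclude $\adj(S)\subseteq\{0,1,\ldots,M\}$ with $M=\max_{w\in\Ap(S;e)}\adj(w)$. Your argument is more elementary and yields an explicit bound on $\adj(S)$ that the paper never states; the paper's (implicit) argument is sharper in a different direction, giving eventual \emph{constancy} of the adjustment on each class rather than mere monotone boundedness. Both are sound, and your closing caution not to claim $\adj(S)=\Ap(B;e)$ is well placed, since that equality is exactly the additivity condition characterized later in Proposition \ref{adj and add} and fails in general.
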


\begin{proof} 
That $\Ap(B;e) \subset \adj(S)$ follows from statement {\it 1.} of Proposition \ref{th:nvsnplusa}. By \cite{BDF}, $B=\{s-ke \st \ord(s;S) \ge k\ge 1\}$, and so $\adj(S) \subset B$.
\end{proof}

Now we define two sets of $B^{\cal D}$-factorizations that we need to consider.
Recall that $S_i$ is the set of elements of $S$ that are congruent to $i$ modulo $e$.

\begin{defn}
Fix $0\leq i < e$ and write $\adj(S_i) = \{ u_0, u_1, u_2, \ldots \}$ in increasing order.  For any $b\in B$, write ${\cal P}(b)$ for the set of all $B^{\cal D}$-factorizations of $b$.  
For $u_j \in S_i$, define ${\cal R}(u_j)$ recursively:
\begin{itemize}
\item ${\cal R}(u_0) = {\cal P}(u_0)$
\item If $j>0$, then $${\cal R}(u_j) = \left\{ {\bf x}\in {\cal P}(u_j) : 
|{\bf x}| < \min\ord(u_{j-1};B^{\cal D})-\frac{u_j-u_{j-1}}{e} \right\}.$$
\end{itemize}
\end{defn}

With these definitions, if $u\in \adj(S)$, then ${\cal P}(u)$ is the set of all $B^{\cal D}$ factorizations of $u$, and ${\cal R}(u)$ is a subset of ${\cal P}(u)$ that contains only those $B^{\cal D}$-factorizations that have a certain bounded length.  As we will see, it is exactly these sets ${\cal R}(u)$ that allow us to compute $\dmax(S)$.

\begin{thm} \label{th:upperbound}
If $n\in S$, then $\dmax(n;S) \leq |{\cal R}(\adj(n))|$.  In particular, each set ${\cal R}(\adj(n))$ is nonempty.
\end{thm}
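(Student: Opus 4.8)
The plan is to fix $n\in S$, let $i$ be its residue modulo $e$, and write $\adj(S_i)=\{u_0<u_1<u_2<\cdots\}$ so that $\adj(n)=u_j$ for a unique $j$. By Proposition~\ref{th:dmaxchar}, $\dmax(n;S)$ equals the number of $B^{\cal D}$-factorizations of $u_j$ of length at most $\ord(n)$, that is, $|\{{\bf x}\in{\cal P}(u_j):|{\bf x}|\le\ord(n)\}|$. Since ${\cal R}(u_j)$ is the subset of ${\cal P}(u_j)$ cut out by an explicit length bound, it suffices to prove the inclusion
\[
\{{\bf x}\in{\cal P}(u_j):|{\bf x}|\le\ord(n)\}\subseteq{\cal R}(u_j),
\]
from which $\dmax(n;S)\le|{\cal R}(u_j)|$ is immediate. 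For $j=0$ this is automatic, since ${\cal R}(u_0)={\cal P}(u_0)$, so the entire content lies in the case $j\ge1$.

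For $j\ge1$ the inclusion follows from the single order estimate
\[
\ord(n)<\min\ord(u_{j-1};B^{\cal D})-\frac{u_j-u_{j-1}}{e},
\]
because then every factorization counted on the left-hand side automatically satisfies the defining inequality of ${\cal R}(u_j)$. This estimate is the heart of the matter, and I would establish it by contradiction. Set $\delta=(u_j-u_{j-1})/e$, which is a positive integer since $u_{j-1}$ and $u_j$ are distinct and congruent modulo $e$. If the estimate fails, then $\ord(n)\ge\min\ord(u_{j-1};B^{\cal D})-\delta$. Putting $k=\delta+\ord(n)$, one has $n=u_j+\ord(n)e=u_{j-1}+ke$ together with $k\ge\min\ord(u_{j-1};B^{\cal D})$. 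Thus $u_{j-1}=n-ke$ has a $B^{\cal D}$-factorization of length at most $k$ (its minimal one), so Lemma~\ref{th:biglemma}(2), applied with $s=n$ and $r=k$, gives $\ord(n;S)\ge k=\delta+\ord(n)$. This forces $\delta\le0$, contradicting $\delta\ge1$.

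The main obstacle is precisely this order estimate; everything else is bookkeeping built on Proposition~\ref{th:dmaxchar}. The key move is to measure $n$ from the \emph{previous} adjustment $u_{j-1}$ rather than from $u_j$: an assumed overestimate of $\ord(n)$ makes $n-ke$ equal to $u_{j-1}$ with $k$ at least its minimal order, and feeding the minimal factorization of $u_{j-1}$ into Lemma~\ref{th:biglemma}(2) then produces an $S$-factorization of $n$ whose length exceeds $\ord(n)$ by $\delta\ge1$, which is absurd. Finally, the ``in particular'' clause is free: every element of $S$ has at least one maximal factorization, so $\dmax(n;S)\ge1$ and hence $|{\cal R}(\adj(n))|\ge1$, showing each ${\cal R}(\adj(n))$ is nonempty.
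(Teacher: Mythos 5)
Your proposal is correct and takes essentially the same route as the paper: both reduce via Proposition~\ref{th:dmaxchar} to showing that every $B^{\cal D}$-factorization of $u_j$ of length at most $\ord(n)$ lies in ${\cal R}(u_j)$, handle $j=0$ trivially, and obtain the key estimate $\ord(n)<\min\ord(u_{j-1};B^{\cal D})-(u_j-u_{j-1})/e$ by applying Lemma~\ref{th:biglemma}(2) to $u_{j-1}=n-ke$ with $k=\ord(n)+(u_j-u_{j-1})/e$. The only difference is presentational: the paper proves the same inequality directly (as $m>r$, with its $r$ equal to your $k$), whereas you argue by contradiction.
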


\begin{proof} Let $n\in S$ and write $n\bmod a = i$.
By Proposition \ref{th:dmaxchar}, we have that $\dmax(n;S)$ is the number of $B^{\cal D}$-factorizations of $\adj(n)$ of length at most $\ord(n)$.  So it suffices to show that if $\bf x$ is a $B^{\cal D}$-factorization of $\adj(n)$ of length at most $\ord(n)$, then ${\bf x}\in {\cal R}(\adj(n))$.

As before, write $\adj(S_i)=\{ u_0, u_1, u_2, \ldots \}$ in increasing order, so that $\adj(n) = u_j$ for some $j$.  Let ${\bf x}\in {\cal P}(u_j)$ with $|{\bf x}|\leq \ord(n)$.

If $j=0$, then ${\bf x}\in {\cal P}(u_j) = {\cal R}(u_j)$ as desired.

If $j>0$, write $m=\min\ord(u_{j-1};B^{\cal D})$.  Since $u_{j-1}<u_j$, we have $u_{j-1} = n-ra$ for some $r > \ord(n)$.  
It follows from Lemma \ref{th:biglemma} that $u_{j-1}$ has no $B^{\cal D}$-factorizations of length at most $r$.  Hence $m>r$.  Also,
\[
u_j - u_{j-1} = (n-\ord(n)e) - (n-re) = (r-\ord(n))e.
\]
Thus
\[
m-\frac{u_j-u_{j-1}}{e} = m-(r-\ord(n)) > \ord(n).
\]
Therefore $|{\bf x}| \leq \ord(n) < m - \frac{u_j-u_{j-1}}{e}$ and so ${\bf x}\in {\cal R}(u_j)$ as desired. 
\end{proof}

The set ${\cal R}(u_j)$ is defined recursively in terms of ${\cal R}(u_{j-1})$.  In the following lemma we see that we can consider the elements of ${\cal R}(u_j)$ in terms of ${\cal R}(u_k)$ for any $k<j$.

\begin{lem} \label{th:helper}
Let $0\leq i < e$ and write $\adj(S_i) = \{ u_0, u_1, u_2, \ldots \}$ in increasing order.  Let $j>0$ and $0\leq k < j$, and let ${\bf x}\in {\cal R}(u_j)$.  Then $|{\bf x}| < \min\ord(u_k;B^{\cal D})-\frac{u_j-u_k}{e}$.
\end{lem}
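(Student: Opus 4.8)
The plan is to reduce the statement to a single monotonicity property of the function
\[
H(\ell) := \min\ord(u_\ell; B^{\mathcal{D}}) + \frac{u_\ell}{e},
\]
defined on the indices of $\adj(S_i) = \{u_0, u_1, u_2, \ldots\}$. Since every $u_\ell$ lies in the congruence class $i$ modulo $e$, all differences $u_j - u_k$ are divisible by $e$, so the quantities appearing in the lemma are integers and $H$ takes values differing by integers. Unwinding the definition of ${\cal R}(u_j)$, any ${\bf x}\in {\cal R}(u_j)$ satisfies $|{\bf x}| < \min\ord(u_{j-1}; B^{\mathcal{D}}) - \frac{u_j - u_{j-1}}{e} = H(j-1) - \frac{u_j}{e}$, while the target inequality for a fixed $k<j$ is precisely $|{\bf x}| < H(k) - \frac{u_j}{e}$. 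Thus it suffices to show $H(k)\ge H(j-1)$ for all $k\le j-1$, and the whole lemma follows once I establish that $H$ is (strictly) decreasing in $\ell$.

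The key step, and the main obstacle, is to prove that $H(k) > H(l)$ whenever $u_k < u_l$ in $\adj(S_i)$, equivalently that $\min\ord(u_k; B^{\mathcal{D}}) > \min\ord(u_l; B^{\mathcal{D}}) + \frac{u_l - u_k}{e}$. I would prove this by the same ``no short factorization'' mechanism used in Theorem \ref{th:upperbound}. Choose $n\in S_i$ with $\adj(n) = u_l$, so that $u_l = n - \ord(n)e$; applying statement {\it 1.} of Lemma \ref{th:biglemma} to a maximal $S$-factorization of $n$ yields a $B^{\mathcal{D}}$-factorization of $u_l$ of length at most $\ord(n)$, hence $\min\ord(u_l; B^{\mathcal{D}}) \le \ord(n)$. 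Writing $p = (u_l - u_k)/e \ge 1$, we have $u_k = n - (\ord(n)+p)e$. If $u_k$ had a $B^{\mathcal{D}}$-factorization of length at most $\ord(n)+p$, then statement {\it 2.} of Lemma \ref{th:biglemma} would produce an $S$-factorization of $n$ of length at least $\ord(n)+p > \ord(n)$, contradicting the maximality of $\ord(n)$. Hence $u_k$ has no such factorization, so $\min\ord(u_k; B^{\mathcal{D}}) > \ord(n) + p \ge \min\ord(u_l; B^{\mathcal{D}}) + \frac{u_l - u_k}{e}$, which is the desired inequality and gives $H(k) > H(l)$.

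Finally I would assemble the pieces. Given ${\bf x}\in {\cal R}(u_j)$ and $k<j$, the monotonicity gives $H(k)\ge H(j-1)$ (with equality exactly when $k=j-1$), so
\[
\min\ord(u_k; B^{\mathcal{D}}) - \frac{u_j - u_k}{e} = H(k) - \frac{u_j}{e} \ge H(j-1) - \frac{u_j}{e} > |{\bf x}|,
\]
which is the claim. I expect the only delicate point to be the bookkeeping in the key step: correctly selecting the element $n$ that realizes $u_l$ as an adjustment and tracking the two applications of Lemma \ref{th:biglemma} in opposite directions, one bounding $\min\ord(u_l; B^{\mathcal{D}})$ from above and one bounding $\min\ord(u_k; B^{\mathcal{D}})$ from below. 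The telescoping and the final reduction are then routine.
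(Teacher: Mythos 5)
Your proof is correct, and it takes a genuinely different route from the paper's --- in fact, it establishes exactly the inequality that the paper's own proof needs but does not actually prove. The paper argues by shifting a minimal $B^{\cal D}$-factorization of $u_k$ upward: with $m=\min\ord(u_k;B^{\cal D})$ and $r=\frac{u_{j-1}-u_k}{e}>0$, adding $r$ to the first coordinate of a minimal factorization of $u_k$ yields a factorization of $u_{j-1}$ of length $m+r$, so $\min\ord(u_{j-1};B^{\cal D})\le m+r$; it then concludes via the chain $|{\bf x}|<\min\ord(u_{j-1};B^{\cal D})-\frac{u_j-u_{j-1}}{e}\le m+r-\frac{u_j-u_{j-1}}{e}=m-\frac{u_j-u_k}{e}$. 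That final equality is false whenever $r>0$: since $\frac{u_j-u_k}{e}=\frac{u_j-u_{j-1}}{e}+r$, correct algebra gives $m+r-\frac{u_j-u_{j-1}}{e}=m-\frac{u_j-u_k}{e}+2r$, so the paper's chain delivers only the weaker bound $|{\bf x}|<m-\frac{u_j-u_k}{e}+2r$ (in the paper's own example with $i=11$, $j=2$, $k=0$, the middle term is $13+1-1=13$ while the claimed right-hand side is $11$). What the chain actually requires is $\min\ord(u_{j-1};B^{\cal D})\le m-r$, which is precisely your monotonicity claim $H(j-1)\le H(k)$; the shift trick bounds $\min\ord(u_{j-1};B^{\cal D})$ in the wrong direction and cannot supply this. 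Your key step proves the needed inequality, strictly and soundly: taking $n\in S_i$ with $\adj(n)=u_{j-1}$, statement {\it 1.}\ of Lemma \ref{th:biglemma} gives $\min\ord(u_{j-1};B^{\cal D})\le\ord(n)$, while statement {\it 2.}\ shows that $u_k=n-(\ord(n)+p)e$, with $p=\frac{u_{j-1}-u_k}{e}\ge1$, admits no $B^{\cal D}$-factorization of length at most $\ord(n)+p$ (otherwise $\ord(n)\ge\ord(n)+p$), whence $\min\ord(u_k;B^{\cal D})>\min\ord(u_{j-1};B^{\cal D})+p$. This is the same no-short-factorization device the paper itself deploys in the proof of Theorem \ref{th:upperbound}, here redirected to compare elements of $\adj(S_i)$, and both applications of Lemma \ref{th:biglemma} are used in valid directions; the telescoping via $H$ is then routine.

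So your proposal is not merely an alternative: it repairs the paper's proof of Lemma \ref{th:helper}. The strict decrease of $H(\ell)=\min\ord(u_\ell;B^{\cal D})+u_\ell/e$ along $\adj(S_i)$ is a clean standalone fact worth recording, since the lemma --- and with it Theorem \ref{th:lowerbound} --- rests on exactly this statement.
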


\begin{proof} By the definition of ${\cal R}(u_j)$, we have $|{\bf x}| < \min\ord(u_{j-1};B^{\cal D})-\frac{u_j - u_{j-1}}{e}$.  If $k = j-1$, then we are done.  If not, then note that $u_{j-1} = u_k + re$ where $r = \frac{u_{j-1} - u_k}{e} > 0$.  

Write $m=\min\ord(u_k;B^{\cal D})$.  Let ${\bf y}=(y_0,y_1,\ldots,y_{t}) \in {\cal P}(u_k)$ with $|{\bf y}| = m$.  Now $(y_0 + r, y_1,\ldots,y_{t})$ is a $B^{\cal D}$-factorization of $u_{j-1}$ with length $m+r$, and so $m+r \geq \min\ord(u_{j-1};B^{\cal D})$.  Now
\begin{eqnarray*}
|{\bf x}| & < & \min\ord(u_{j-1};B^{\cal D})-\frac{u_j - u_{j-1}}{e} \\
& \leq & m+r-\frac{u_j - u_{j-1}}{e} \\
&=& m - \frac{u_j-u_k}{e},
\end{eqnarray*}
which is the desired result.
\end{proof}

We are now ready to prove the connection between maximal denumerants and the $\cal R$ sets.

\begin{thm} \label{th:lowerbound}
For all $u \in \adj(S_i)$, there exists $s\in S_i$ with $\dmax(s;S) = |{\cal R}(u)|$.  
\end{thm}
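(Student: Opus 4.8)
The plan is to produce, for each $u \in \adj(S_i)$, an explicit element $s \in S_i$ with $\adj(s) = u$ whose order is just large enough that the length bound defining ${\cal R}(u)$ coincides with the bound ``$\leq \ord(s)$'' appearing in Proposition \ref{th:dmaxchar}; together with Theorem \ref{th:upperbound} this will pin $\dmax(s;S)$ to $|{\cal R}(u)|$. The one tool I will lean on repeatedly is an immediate consequence of statement {\it 1.}\ of Lemma \ref{th:biglemma}: if $\adj(n)=u$, then applying the lemma to a maximal $S$-factorization of $n$ produces a $B^{\cal D}$-factorization of $u=n-\ord(n)e$ of length at most $\ord(n)$, so that
\[
\min\ord(u;B^{\cal D}) \;\leq\; \ord(n;S) \qquad \text{whenever } \adj(n)=u.
\]

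Write $\adj(S_i)=\{u_0<u_1<u_2<\cdots\}$ as in the definition of ${\cal R}$, and suppose $u=u_j$. I first record the monotonicity of the adjustment along $S_i$: adding $1$ to the $e$-component of a maximal factorization gives $\ord(n+e)\geq\ord(n)+1$, hence $\adj(n+e)\leq\adj(n)$ for $n\in S_i$; thus $\adj(n)$ is non-increasing in $n$ and, being bounded below in $B$, stabilizes at $u_0$. For $j\geq 1$ this makes $\{n\in S_i : \adj(n)=u_j\}$ a nonempty finite set; let $s$ be its largest element. Because $\adj$ is non-increasing and $s,s+e$ are consecutive in $S_i$, the value $\adj(s+e)$ must be the next value down, namely $u_{j-1}$; no intermediate value $u_{j'}$ with $j'<j-1$ can occur, else the element realizing $u_{j-1}$ would have to lie strictly between $s$ and $s+e$. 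A direct computation from the definition of $\adj$ then gives
\[
\ord(s+e;S)=\ord(s;S)+1+\frac{u_j-u_{j-1}}{e}.
\]

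Now I combine the two displays. Applying the first inequality to $n=s+e$ (where $\adj(s+e)=u_{j-1}$) and substituting the order relation yields
\[
\min\ord(u_{j-1};B^{\cal D})-\frac{u_j-u_{j-1}}{e}\;\leq\;\ord(s+e;S)-\frac{u_j-u_{j-1}}{e}\;=\;\ord(s;S)+1.
\]
Hence every ${\bf x}\in{\cal R}(u_j)$, which by definition satisfies $|{\bf x}|<\min\ord(u_{j-1};B^{\cal D})-(u_j-u_{j-1})/e$, has $|{\bf x}|\leq\ord(s;S)$. Conversely, the argument in the proof of Theorem \ref{th:upperbound} applied to $n=s$ shows that every $B^{\cal D}$-factorization of $u_j$ of length at most $\ord(s;S)$ lies in ${\cal R}(u_j)$. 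Therefore ${\cal R}(u_j)$ is exactly the set of $B^{\cal D}$-factorizations of $\adj(s)=u_j$ of length at most $\ord(s;S)$, and Proposition \ref{th:dmaxchar} gives $\dmax(s;S)=|{\cal R}(u_j)|$.

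It remains to treat $j=0$, where $\{n:\adj(n)=u_0\}$ is unbounded and there is no ``$u_{j-1}$''. Here $u_0=\min\adj(S_i)$ is the least element of $B$ in the class of $i$, so $u_0\in\Ap(B;e)$, and ${\cal R}(u_0)={\cal P}(u_0)$ gives $|{\cal R}(u_0)|=d(u_0;B^{\cal D})$; taking $s=u_0+\ord(u_0;B^{\cal D})e$, statement {\it 2.}\ of Proposition \ref{th:nvsnplusa} yields $\dmax(s;S)=d(u_0;B^{\cal D})=|{\cal R}(u_0)|$ directly. The main obstacle I anticipate is not any single estimate but the structural bookkeeping of the second paragraph: proving that the largest $s$ with $\adj(s)=u_j$ really satisfies $\adj(s+e)=u_{j-1}$, i.e.\ that consecutive elements of $S_i$ cannot skip a value of $\adj(S_i)$, since it is precisely this fact that makes the defining bound of ${\cal R}(u_j)$ match $\ord(s;S)$ on the nose.
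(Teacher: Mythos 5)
Your proof is correct, but it takes a genuinely different route from the paper's. The paper builds the witness out of ${\cal R}(u_j)$ itself: it sets $s = u_j + re$ where $r$ is the maximal length occurring in ${\cal R}(u_j)$ (nonempty by Theorem \ref{th:upperbound}), so that ${\cal R}(u_j) = \{{\bf x}\in {\cal P}(u_j) : |{\bf x}|\le r\}$ is immediate from the strict defining inequality, and all the work goes into showing $\ord(s)=r$, i.e.\ $\adj(s)=u_j$; since a priori $\adj(s)$ could equal $u_k$ for any $k<j$, the paper needs Lemma \ref{th:helper}, which extends the defining bound of ${\cal R}(u_j)$ from the adjacent index $j-1$ to all smaller indices, and derives a contradiction with Proposition \ref{th:dmaxchar}. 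You reverse where the difficulty sits: taking $s$ maximal with $\adj(s)=u_j$ makes $\adj(s)=u_j$ automatic, and the work goes into matching the bound --- your monotonicity/no-skipping argument shows $\adj(s+e)=u_{j-1}$ exactly, whence $\ord(s+e)=\ord(s)+1+(u_j-u_{j-1})/e$, and combined with $\min\ord(u_{j-1};B^{\cal D})\le \ord(s+e)$ (from Lemma \ref{th:biglemma}, statement \emph{1}) this converts the strict bound defining ${\cal R}(u_j)$ into precisely $|{\bf x}|\le \ord(s)$. Because you only ever compare adjacent indices, you never need Lemma \ref{th:helper}; the price is that you must invoke the proof (not merely the statement) of Theorem \ref{th:upperbound} for the reverse inclusion, and you need a separate $j=0$ case resting on $u_0\in\Ap(B;e)$ and statement \emph{2} of Proposition \ref{th:nvsnplusa}, whereas the paper's construction covers $j=0$ uniformly (for $j=0$ the assumption $\ord(s)>r$ contradicts minimality of $u_0$ outright). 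Your identification of $u_0$ as the least element of $B$ in the class of $i$ is asserted rather than proved, but it follows in one line from the corollary after Proposition \ref{th:nvsnplusa} ($\Ap(B;e)\subset\adj(S)\subset B$, together with $\adj(n)\equiv n \bmod e$), so this is only a presentational gap. As a byproduct, your route makes explicit two structural facts the paper's shorter contradiction argument does not: $\adj$ is non-increasing along $S_i$, and consecutive elements of $S_i$ cannot skip a value of $\adj(S_i)$.
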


\begin{proof} Let $u_j \in S_i$.  Let $r$ be the length of the longest factorization in ${\cal R}(u_j)$, and let $s = u_j + re$.  Since $s-re = u_j$ has a $B^{\cal D}$-factorization of length at most $r$, by Lemma \ref{th:biglemma} we have $\ord(s) \geq r$.

Suppose $\ord(s) > r$.  Then $\adj(s) = u_k$ for some $k<j$.  From Lemma  \ref{th:helper} we have $r < \min\ord(u_k;B^{\cal D}) - \frac{u_j-u_k}{e}$.  Now $u_j = s-re$ and $u_k = s-\ord(s)e$, so we have
$r < \min\ord(u_k;B^{\cal D}) - (\ord(s)-r)$, and so $\min\ord(u_k;B^{\cal D}) > \ord(s)$.  So the shortest $B^{\cal D}$-factorization of $\adj(s)$ has length greater than $\ord(s)$.  This is a contradiction, since by Proposition \ref{th:dmaxchar} we have that $\adj(s)$ always has at least one $B^{\cal D}$-factorization of length at most $\ord(s)$.  This contradiction proves that $\ord(s)=r$, and so $u_j = \adj(s)$.

Now by Proposition \ref{th:dmaxchar}, we know that $\dmax(s;S)$ equals the number of $B^{\cal D}$-factorizations of $u_j$ of length at most $r$.  However, since $r$ is the length of the longest factorization in ${\cal R}(u_j)$, it follows that ${\cal R}(u_j)$ is the set of all $B^{\cal D}$-factorizations of $u_j$ of length at most $r$.  Thus $\dmax(s;S) = |{\cal R}(u_j)|$. 
\end{proof}

The next corollaries follow immediately from Theorems \ref{th:upperbound} and \ref{th:lowerbound}.

\begin{cor} \label{th:dmaxsi}
Let $S$ be a semigroup with multiplicity $e$, and let $0\leq i < e$.  Then 
$\dmax(S_i) = \max_{u\in \adj(S_i)} |{\cal R}(u)| $.
\end{cor}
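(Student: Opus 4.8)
The plan is to prove the claimed equality by establishing the two inequalities separately, using Theorem \ref{th:upperbound} for the ``$\leq$'' direction and Theorem \ref{th:lowerbound} for the ``$\geq$'' direction. Throughout I read $\dmax(S_i)$ as $\max_{s\in S_i}\dmax(s;S)$, and I first note that the maximum on the right-hand side is taken over the finite set $\adj(S_i)\subseteq\adj(S)$, which is finite by the corollary following Proposition \ref{th:nvsnplusa}; thus both maxima are well defined.

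For the upper bound I would fix any $s\in S_i$. Since $\adj(s)=s-\ord(s)e\equiv s\equiv i \pmod e$, the element $\adj(s)$ lies in $\adj(S_i)$. Theorem \ref{th:upperbound} gives $\dmax(s;S)\leq|{\cal R}(\adj(s))|$, and because $\adj(s)$ is one of the elements over which the right-hand maximum ranges, $|{\cal R}(\adj(s))|\leq\max_{u\in\adj(S_i)}|{\cal R}(u)|$. Chaining these bounds and then taking the maximum over $s\in S_i$ yields $\dmax(S_i)\leq\max_{u\in\adj(S_i)}|{\cal R}(u)|$.

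For the lower bound I would choose $u^\ast\in\adj(S_i)$ attaining $\max_{u\in\adj(S_i)}|{\cal R}(u)|$. Applying Theorem \ref{th:lowerbound} to $u^\ast$ produces an element $s\in S_i$ with $\dmax(s;S)=|{\cal R}(u^\ast)|$. Hence $\dmax(S_i)\geq\dmax(s;S)=|{\cal R}(u^\ast)|=\max_{u\in\adj(S_i)}|{\cal R}(u)|$, and combining the two inequalities gives the desired equality.

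There is no genuine obstacle here: the content is carried entirely by the two theorems, and the argument is the standard ``both inequalities hold, hence equality'' pattern. The only point requiring care is the bookkeeping that the adjustment map preserves the residue class $i$ modulo $e$, so that $\adj(S_i)$ really is the image of $S_i$ and the two directions match up over the same index set.
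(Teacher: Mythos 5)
Your proposal is correct and follows exactly the route the paper intends: the paper states that this corollary ``follows immediately from Theorems \ref{th:upperbound} and \ref{th:lowerbound},'' and your two-inequality argument (Theorem \ref{th:upperbound} for the upper bound, Theorem \ref{th:lowerbound} for the lower bound, with the observation that $\adj(s)\equiv s\pmod e$ keeps everything in $\adj(S_i)$) is precisely that immediate deduction, spelled out.
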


\begin{cor}
Let $S$ be a semigroup with the notation defined above.
\[
\dmax(S) = \max_{0\leq i<e,u\in \adj(S_i)} |{\cal R}(u)| .
\]
\end{cor}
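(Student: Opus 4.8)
The plan is to reduce the statement to Corollary~\ref{th:dmaxsi} by partitioning $S$ according to residue class modulo $e$. First I would recall that, by definition, $\dmax(S)=\max_{s\in S}\dd_{\max}(s;S)$, and that every $s\in S$ lies in exactly one of the classes $S_0,S_1,\ldots,S_{e-1}$, so that $S=\bigsqcup_{0\le i<e}S_i$ is a disjoint union into finitely many pieces. Because $S$ has finite complement in $\N$ (equivalently $\gcd\left(e,a_1,\ldots,a_t\right)=1$), each class $S_i$ is nonempty, so each inner maximum appearing below is taken over a nonempty set.

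Next I would split the maximum over this finite disjoint union. Writing $\dmax(S_i)=\max_{s\in S_i}\dd_{\max}(s;S)$ as in the statement of Corollary~\ref{th:dmaxsi}, the elementary fact that the maximum over a finite union is the maximum of the maxima gives
\[
\dmax(S)=\max_{s\in S}\dd_{\max}(s;S)=\max_{0\le i<e}\Bigl(\max_{s\in S_i}\dd_{\max}(s;S)\Bigr)=\max_{0\le i<e}\dmax(S_i).
\]
Here I would note that all quantities involved are finite, so that the maxima are genuinely attained: $\dmax(S)$ is finite by the result of \cite{BHJ}, and $\adj(S)$ is finite by the corollary following Proposition~\ref{th:nvsnplusa}, which also records that each $\adj(S_i)$ is a nonempty finite set.

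Finally I would substitute Corollary~\ref{th:dmaxsi}, which asserts $\dmax(S_i)=\max_{u\in\adj(S_i)}|{\cal R}(u)|$ for each $0\le i<e$, into the last expression and collapse the two nested finite maxima into a single maximum over the set of pairs $(i,u)$ with $0\le i<e$ and $u\in\adj(S_i)$:
\[
\dmax(S)=\max_{0\le i<e}\Bigl(\max_{u\in\adj(S_i)}|{\cal R}(u)|\Bigr)=\max_{0\le i<e,\,u\in\adj(S_i)}|{\cal R}(u)|,
\]
which is exactly the claim. There is essentially no analytic obstacle in this argument; the work is entirely bookkeeping. The only point that deserves a moment's care is confirming that every index set ($S_i$ and $\adj(S_i)$) is nonempty and finite, so that each maximum is well-defined and attained. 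Once that is in hand, the corollary is a formal consequence of Corollary~\ref{th:dmaxsi} together with the definition of $\dmax(S)$.
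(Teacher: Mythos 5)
Your proposal is correct and matches the paper's intent exactly: the paper states that this corollary ``follows immediately'' from Theorems \ref{th:upperbound} and \ref{th:lowerbound} (via Corollary \ref{th:dmaxsi}), and your argument simply spells out that immediate deduction by partitioning $S$ into the residue classes $S_i$ and collapsing the nested finite maxima. Your added care about nonemptiness and finiteness of the index sets is sound bookkeeping but introduces nothing beyond what the paper leaves implicit.
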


We will now show an example to illustrate how to compute $\dmax(S)$ working with $B^{\cal D}$-factorizations.

\begin{ex}
Let $S = \langle 15, 17, 36, 38, 71 \rangle$.
\end{ex} 

For this example, we will let $i=11$ and compute $\dmax(S_{11})$.  The computation of $\dmax(S_i)$ for the other values of $i$ is similar.

Now ${\cal D} = \{ 15, 2, 21, 23, 56 \}$ and $B = \langle 15, 2, 21, 23, 56 \rangle$.  
The element of $\Ap(B;15)$ congruent to 11 mod 15 is $f_{11}=26$.  To compute $\adj(S_{11})$, we start with the smallest element of $S_{11}$ (which is 71) and work up to $f_{11}+\min\ord(f_{11};B^{\cal D})$, by Lemma \ref{th:nvsnplusa}.  Since $f_{11}$ only has one $B^{\cal D}$-factorization (of length 13), we have the following computation:

\begin{center}
\begin{tabular}{l|c|c|c|c|c|c|c|c|c|c|c|}
$s\in S_{11}$ & 71 & 86 & 101 & 116 & 131 & 146 & 161 & 176 & 191 & 206 & 221 \\ \hline
$\ord(s)$ & 1 & 2 & 3 & 4 & 5 & 6 & 7 & 8 & 10 & 11 & 13 \\ \hline
$\adj(s)$ & 56 & 56 & 56 & 56 & 56 & 56 & 56 & 56 & 41 & 41 & 26 \\ \hline
\end{tabular}
\end{center}

We see that $\adj(S_{11}) = \{ 26, 41, 56 \}$.  

Now ${\cal R}(26) = {\cal P}(26)$ is the set of all $B^{\cal D}$-factorizations of 26, of which there is only one: $(0,13,0,0,0)$.  So $M(26) = 13$.

Next, ${\cal R}(41)$ is the set of $B^{\cal D}$-factorizations of 41 with length less than $\min\ord(26;B^{\cal D})-\frac{41-26}{15} = 12$.  There are two of these: $(0,9,0,1,0)$ and $(0,10,1,0,0)$.  So $\min\ord(41;B^{\cal D})=10$.

Finally, ${\cal R}(56)$ is the set of $B^{\cal D}$-factorizations of 56 with length less than $10-\frac{56-41}{15} = 9$.  There are three of these: $(0,0,0,0,1)$, $(0,5,0,2,0)$, and $(0,6,1,1,0)$.

The largest of these three sets is $|{\cal R}(56)|=3$, so by Corollary \ref{th:dmaxsi} we have $\dmax(S_{11})=3$.

\section{Additive Semigroups}

When computing the maximal denumerant of $S$ using the set $\adj(S) \subset B$, we are concerned with $B^\mathcal{D}$ factorizations with bounded length. However, for elements of $\adj(S)$ contained in $\Ap(B;e)$, this restriction is removed and we consider the denumerant of these elements (with respect to $\mathcal D$). Thus, we have this result: If $\adj(S) = \Ap(B;e)$, then $$\dmax(S) = \max\{d(f;B^{\mathcal{D}})\st f\in \Ap(B;e)\}.$$ We can refine this even more. Recall from the introduction that $u \pre u'$ if there exists an $s\in S$ such that $u+s = u'$. This defines a partial ordering on $S$.

\begin{defn}
For a semigroup $S$ and element $u\in S$, we define $\maxap(S;u) = \{w\in \Ap(S;u) \st w$ is maximal in $\Ap(S;u)\setminus\{0\}$ with respect to $\pre\}$.
\end{defn}

\begin{prop}\label{dmax and maxap}
Let $S$ be a semigroup with multiplicity $e$ and blowup $B$. If $\adj(S) = \Ap(B;e)$, then $$\dmax(S) = \max\{d(f;B^{\mathcal{D}})\st f\in \maxap(B;e)\}.$$
\end{prop}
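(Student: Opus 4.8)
The plan is to show that restricting the maximum from all of $\Ap(B;e)$ to its $\pre$-maximal nonzero elements does not change the value, so that the claim follows at once from the identity $\dmax(S)=\max\{d(f;B^{\mathcal D}):f\in\Ap(B;e)\}$ already established under the hypothesis $\adj(S)=\Ap(B;e)$. Since $\maxap(B;e)\subseteq\Ap(B;e)$, the inequality $\max_{f\in\maxap(B;e)} d(f;B^{\mathcal D})\le\max_{f\in\Ap(B;e)} d(f;B^{\mathcal D})$ is immediate, and all the work lies in the reverse inequality.

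The key step is a monotonicity lemma: if $f\pre f'$ in $B$, then $d(f;B^{\mathcal D})\le d(f';B^{\mathcal D})$. To prove it, write $f'=f+b$ with $b\in B$ and fix one $B^{\mathcal D}$-factorization $\mathbf z$ of $b$. Componentwise addition $\mathbf x\mapsto\mathbf x+\mathbf z$ then carries each $B^{\mathcal D}$-factorization of $f$ to a $B^{\mathcal D}$-factorization of $f'$, and this map is injective because translation by the fixed vector $\mathbf z$ is injective. Hence $|{\cal P}(f)|\le|{\cal P}(f')|$, which is exactly the asserted inequality on denumerants.

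Next I would run the standard finite-poset argument. Fix $f\in\Ap(B;e)$ and aim for $d(f;B^{\mathcal D})\le\max_{g\in\maxap(B;e)} d(g;B^{\mathcal D})$. If $f=0$ this is clear, since $d(0;B^{\mathcal D})=1$ (the zero tuple is the only factorization, as every generator in $\mathcal D$ is positive), while every denumerant is at least $1$ and $\maxap(B;e)$ is nonempty whenever $S\ne\N$. If $f\ne0$, consider the finite set of elements of $\Ap(B;e)\setminus\{0\}$ lying above $f$ under $\pre$; it contains $f$, hence has a $\pre$-maximal element $g$, and a one-line transitivity argument shows $g$ is maximal in all of $\Ap(B;e)\setminus\{0\}$, so $g\in\maxap(B;e)$. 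Applying the monotonicity lemma to $f\pre g$ yields $d(f;B^{\mathcal D})\le d(g;B^{\mathcal D})$, which finishes the reverse inequality and the proof.

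The main obstacle is the monotonicity lemma, but as the injection above shows it is in fact short; the only genuine care is in the poset bookkeeping (verifying that the locally maximal $g$ is globally maximal within $\Ap(B;e)\setminus\{0\}$) and in recording that the degenerate case $S=\N$, where $\maxap(B;e)$ is empty, must be excluded.
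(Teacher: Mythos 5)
Your proof is correct and follows essentially the same route as the paper: both reduce to the identity $\dmax(S)=\max\{d(f;B^{\mathcal D}) \st f\in\Ap(B;e)\}$ noted at the start of Section 4, and then prove the reverse inequality by translating each factorization of a non-maximal element by one fixed factorization of the difference, an injection that gives $d(f;B^{\mathcal D})\ge d(g;B^{\mathcal D})$. If anything your write-up is slightly more careful than the paper's: you observe that the difference only needs to lie in $B$ (the paper insists $g'\in\Ap(B;e)$, which is true but unnecessary for the injection), and you make explicit the finite-poset step producing a $\pre$-maximal element above $f$ as well as the degenerate cases $f=0$ and $S=\N$, all of which the paper leaves implicit.
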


\begin{proof}
Let $g\in \Ap(B;e)\setminus \maxap(B;e)$. It suffices to show that there exists an element $f \in \maxap(B;e)$ such that $d(f;B^{\mathcal{D}}) \ge d(g;B^{\mathcal{D}})$. To do this, we choose $f \in \maxap(B;e)$ so that $f = g + g'$ where $g' \in \Ap(B;e)$. Let ${\bf C = \{c_i\}}$ be the set of $B^{\mathcal D}$-factorizations of $g$ and fix a factorization ${\bf d}$ of $g'$. The ${\bf c_i} + {\bf d}$ is a $B^{\mathcal D}$-factorization of $f$ for each $i$. Furthermore, ${\bf c_i} = {\bf c_j}$ if and only if $i=j$. Thus, we have $d(f;B^{\mathcal{D}}) \ge d(g;B^{\mathcal{D}})$.
\end{proof}

Proposition \ref{dmax and maxap} can greatly reduce the amount of work needed to compute the maximal denumerant of a semigroup $S$, especially in light of the result contained in Corollary \ref{blowup sym}, which states when we need only compute the denumerant of a single element of $B$. This follows from Proposition \ref{dmax and sym} found in \cite{Bry}.

\begin{defn}
The {\em Frobenius number}, $F(S)$, of a semigroup $S$ is the largest integer not contained in $S$. The semigroup $S$ is called {\em symmetric} if whenever $x+y = F(S)$ for $x,y\in \mathbb Z$, then exactly one of $x$ and $y$ belongs to $S$.
\end{defn}

It is easy to verify that $\mathbb N$ is symmetric, and it is classically known that any semigroup with embedding dimension 2 is symmetric, see \cite{Br,S}. For such semigroups, there is only one maximal element in an Ap\'ery set with respect to the partial ordering $\pre$.

\begin{prop}\label{dmax and sym}
Let $S$ be a semigroup, $u\in S$, and $\Ap(S;u) = \{w_0,w_1,\dots,w_{u-1}\}$ where $w_0<w_1<\dots<w_{u-1}$. Then the following are equivalent.
\begin{enumerate}
\item $S$ is symmetric
\item $w_i+w_j=w_{u-1}$ whenever $i+j=u-1$
\item $w \pre w_{u-1}$ for all $w\in \Ap(S;u)$
\item $\maxap(S;u) = \{F(S) + u\}$
\end{enumerate}
\end{prop}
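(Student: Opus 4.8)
The plan is to establish a cycle of implications organized around the order-reversing involution $\phi(w) = w_{u-1} - w$, which turns out to carry all of the content. First I would record three standard facts about the Apéry set that I will use repeatedly: the $w_i$ form a complete set of residues modulo $u$ (so in particular $w_0 = 0$ and $u \in S$), the membership test $w \in \Ap(S;u)$ iff $w \in S$ and $w - u \notin S$, and the Brauer--Shockley identity $w_{u-1} = F(S) + u$. If the last is not available to cite, I would prove it in one line: an integer $n$ lies in $S$ exactly when $n \ge w_i$ for the Apéry element $w_i$ in its residue class, so the largest gap in class $i$ is $w_i - u$ and hence $F(S) = \max_i(w_i - u) = w_{u-1} - u$.

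For $(1)\Rightarrow(2)$ I would show that symmetry forces $\phi$ to map $\Ap(S;u)$ into itself. Writing $w_{u-1} = F(S)+u$, we have $\phi(w) = F(S) - (w-u)$; since $w \in \Ap(S;u)$ gives $w - u \notin S$, symmetry yields $\phi(w) \in S$. Moreover $\phi(w) - u = F(S) - w$, which is not in $S$ because $w \in S$ and symmetry forbids both $w$ and $F(S)-w$ from lying in $S$. Hence $\phi(w) \in \Ap(S;u)$. Being a strictly order-reversing involution of the finite chain $w_0 < \cdots < w_{u-1}$, the map $\phi$ must send $w_i$ to $w_{u-1-i}$, which is precisely $(2)$.

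The remaining implications I would run as $(2)\Rightarrow(3)\Rightarrow(4)$ and then close through $(4)\Rightarrow(3)\Rightarrow(2)\Rightarrow(1)$. Here $(2)\Rightarrow(3)$ is immediate, since $w_i + w_{u-1-i} = w_{u-1}$ exhibits $w_{u-1-i} \in S$ as a witness to $w_i \pre w_{u-1}$; and $(3)\Rightarrow(4)$ follows because $w_{u-1}$ then dominates every element while nothing lies strictly $\pre$-above it, making it the unique maximal element, equal to $F(S)+u$. For the return trip, $(4)\Rightarrow(3)$ uses that in a finite poset every element sits below some maximal element, here only $w_{u-1}$; $(3)\Rightarrow(2)$ recovers the involution, since $w_{u-1} - w \in S$ and $(w_{u-1}-w)-u = F(S)-w$ cannot be in $S$ (else $F(S) = w + (F(S)-w) \in S$), so $\phi$ again preserves $\Ap(S;u)$ and is the order-reversing involution. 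Finally, for $(2)\Rightarrow(1)$ I would check symmetry directly: given a gap $z$ with $0 \le z \le F(S)$, write $z = w - \ell u$ with $\ell \ge 1$ for the Apéry element $w$ in its class, so that $F(S) - z = (w_{u-1}-w) + (\ell-1)u$; by $(2)$ the term $w_{u-1} - w$ lies in $\Ap(S;u) \subseteq S$, and $u \in S$, so $F(S) - z \in S$. Since both $z$ and $F(S)-z$ lying in $S$ is automatically impossible, $S$ is symmetric.

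The main obstacle is the pair of genuinely arithmetic directions $(1)\Rightarrow(2)$ and $(2)\Rightarrow(1)$: everything reduces to proving that $\phi$ permutes $\Ap(S;u)$, and this is exactly the point where the Frobenius-number bookkeeping must be controlled, via the identity $w_{u-1} = F(S)+u$ and the residue-class reduction $z = w - \ell u$ that exposes $F(S)-z$ as a sum of semigroup elements. By contrast the poset statements $(3)$ and $(4)$ are formal consequences once $\phi$ and its order-reversing property are established.
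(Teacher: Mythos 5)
Your proposal is correct, but there is little in the paper to compare it against: the authors state Proposition \ref{dmax and sym} without proof, importing it from Bryant's thesis \cite{Bry} (the equivalence of the first two conditions is classical, cf.\ Brauer \cite{Br} and Rosales \cite{JCR}). Your argument is a complete, self-contained substitute, and its engine --- the identity $w_{u-1}=F(S)+u$ together with the order-reversing involution $\phi(w)=w_{u-1}-w$, shown to map $\Ap(S;u)$ into itself under hypothesis (1) or (3) --- is exactly the classical mechanism; the residue-class reduction $z=w-\ell u$, $\ell\ge 1$, in your $(2)\Rightarrow(1)$ step is likewise the standard way to verify symmetry on gaps, and each implication checks out (in $(1)\Rightarrow(2)$, symmetry applied to the pair $w-u,\ F(S)-(w-u)$ gives $\phi(w)\in S$, and applied to $w,\ F(S)-w$ gives $\phi(w)-u\notin S$; an order-reversing injection of the finite chain into itself then forces $\phi(w_i)=w_{u-1-i}$). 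Two trivial points to police, neither affecting correctness: the poset in the definition of $\maxap(S;u)$ is $\Ap(S;u)\setminus\{0\}$, so in $(3)\Leftrightarrow(4)$ you should note that $w_0=0$ is excluded (it satisfies $0\pre w_{u-1}$ trivially) and that $w_{u-1}=F(S)+u$ holds unconditionally by your Brauer--Shockley identity, so condition (4) genuinely asserts only the \emph{uniqueness} of the maximal element; and in the symmetry check for $(2)\Rightarrow(1)$ the cases $z<0$ and $z>F(S)$ deserve an explicit (one-line) dismissal alongside your observation that $z$ and $F(S)-z$ cannot both lie in $S$.
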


\begin{cor}\label{blowup sym}
Let $S$ be a semigroup with multiplicity $e$ and blowup $B$. If $\adj(S) = \Ap(B;e)$ and $B$ is symmetric, then $\dmax(S) = d(F(B)+e;B^{\mathcal{D}})$.\end{cor}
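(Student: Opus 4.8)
The plan is to obtain the result as a direct composition of the two preceding propositions, whose hypotheses have been set up precisely for this purpose. First I would invoke Proposition \ref{dmax and maxap}: since the hypothesis $\adj(S) = \Ap(B;e)$ holds, it gives immediately
$$\dmax(S) = \max\{d(f;B^{\mathcal D}) : f \in \maxap(B;e)\}.$$
This reduces the entire problem to understanding the set $\maxap(B;e)$ and showing that the denumerant over this set is controlled by the single element $F(B)+e$.

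Next I would apply Proposition \ref{dmax and sym} to the blowup semigroup $B$ with the distinguished element $u = e$. Before doing so I should record two small consistency points: that $e \in B$ (which holds because $e \in \mathcal D$ and $\mathcal D$ generates $B$), and that $B$ is itself a numerical semigroup, so that $\Ap(B;e)$ consists of exactly $e$ elements, one minimal representative in each residue class modulo $e$. This makes the indexing $\Ap(B;e) = \{w_0, w_1, \ldots, w_{e-1}\}$ demanded by the proposition legitimate. Since $B$ is symmetric by hypothesis, the equivalence of conditions (1) and (4) in Proposition \ref{dmax and sym} then yields $\maxap(B;e) = \{F(B)+e\}$.

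Finally, I would substitute this singleton back into the formula produced by Proposition \ref{dmax and maxap}. The maximum over a one-element set collapses to the value at that element, giving $\dmax(S) = d(F(B)+e;\, B^{\mathcal D})$, as claimed. I expect no genuine obstacle here, since the argument is essentially a two-step chaining of the prior results; the only point requiring care is the bookkeeping of applying Proposition \ref{dmax and sym} to $B$ rather than to $S$, with $e$ playing the role of the element $u$, and confirming that $\Ap(B;e)$ has the cardinality the proposition presupposes.
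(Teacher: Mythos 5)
Your proof is correct and is exactly the paper's intended argument: the corollary is stated without a separate proof precisely because it follows by chaining Proposition \ref{dmax and maxap} with the equivalence of conditions (1) and (4) in Proposition \ref{dmax and sym}, applied to $B$ with $u=e$, which collapses $\maxap(B;e)$ to the singleton $\{F(B)+e\}$. Your added bookkeeping --- that $e\in B$ and that $B$ is a genuine numerical semigroup (since $\gcd(e,a_1-e,\dots,a_t-e)=1$), so $\Ap(B;e)$ has the $e$ elements the proposition presupposes --- is sound and only strengthens the write-up.
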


The next definition distinguishes an important class of numerical semigroups, which turn out to be precisely the semigroups we are considering in this section.

\begin{defn}
A numerical semigroup $S$ with multiplicity $e$ is {\em additive} if
\begin{equation} \label{eq:adddef}
\ord(u+e;S) = \ord(u;S)+1
\end{equation}
for all $u\in S$.
\end{defn}

Additive semigroups are semigroups for which the associated graded ring $gr_m(R) = \bigoplus_{i=0}^\infty m^i/m^{i+1}$ of the corresponding ring $(R,m) = k[[x^e,x^{a_1},x^{a_2},\dots,x^{a_t}]]$ with respect to the maximal ideal $m$ is Cohen-Macaulay. 

\begin{prop}\label{adj and add}
A semigroup $S$ with multiplicity $e$ and blowup $B$ is additive if and only if $\adj(S) = \Ap(B;e)$.
\end{prop}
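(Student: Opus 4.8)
The plan is to reduce additivity to a statement about how the adjustment behaves along each congruence class modulo $e$, and then to match that with the condition $\adj(S) = \Ap(B;e)$. The first step is a purely algebraic identity: for any $s \in S$, since $\adj(s+e) = (s+e) - \ord(s+e)e$ and $\adj(s) = s - \ord(s)e$, subtracting gives
\[
\adj(s+e) = \adj(s) \iff \ord(s+e) = \ord(s) + 1 .
\]
Thus $S$ is additive if and only if $\adj(s+e) = \adj(s)$ for every $s \in S$. Since $e \in S$, each class $S_i$ is the arithmetic progression $\{\min S_i + ke \st k\ge 0\}$, so every element of $S_i$ is reached from its least element by repeatedly adding $e$; hence the condition ``$\adj(s+e)=\adj(s)$ for all $s$'' is equivalent to saying that $\adj$ is \emph{constant} on each $S_i$.

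Next I would identify the forced value of that constant. For each residue $i$ write $f_i$ for the unique element of $\Ap(B;e)$ congruent to $i$ modulo $e$; recall that $\Ap(B;e)$ contains exactly one element in each residue class, namely the least element of $B$ in that class. I claim $f_i = \min \adj(S_i)$. By the corollary following Proposition~\ref{th:nvsnplusa} we have $\Ap(B;e) \subseteq \adj(S) \subseteq B$, and since adjustment preserves residues modulo $e$, every element of $\adj(S_i)$ lies in $B$ and is congruent to $i$, while $f_i \in \adj(S_i)$. As $f_i$ is the least element of $B$ congruent to $i$, it is in particular the least element of $\adj(S_i)$.

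With these pieces in place both implications are short. If $S$ is additive, then $\adj$ is constant on each $S_i$; since $f_i \in \adj(S_i)$, that constant value must be $f_i$, so $\adj(S_i) = \{f_i\}$, and therefore $\adj(S) = \bigcup_i \adj(S_i) = \{f_i \st 0\le i<e\} = \Ap(B;e)$. Conversely, if $\adj(S) = \Ap(B;e)$, then $\adj(S_i)$ is a nonempty subset of $\adj(S)$ all of whose elements are congruent to $i$; the only such element of $\Ap(B;e)$ is $f_i$, forcing $\adj(S_i) = \{f_i\}$. Hence $\adj$ is constant on each $S_i$, which by the equivalence of the first step yields $\ord(s+e) = \ord(s)+1$ for all $s\in S$, i.e.\ $S$ is additive.

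The main obstacle is conceptual rather than computational: recognizing that additivity is precisely the statement that the adjustment is constant along congruence classes modulo $e$, and that this constant is pinned down to the Ap\'ery element $f_i$. Once the algebraic equivalence and the identification $\min \adj(S_i) = f_i$ are secured, the remainder is just bookkeeping exploiting the one-element-per-residue-class structure of $\Ap(B;e)$.
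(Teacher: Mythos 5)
Your proof is correct and follows essentially the same route as the paper's: both directions reduce to showing that $\adj$ is constant on each residue class $S_i$ with value the Ap\'ery element $f_i$, using the containment $\Ap(B;e)\subseteq\adj(S)$ (the corollary to Proposition~\ref{th:nvsnplusa}) for membership and the one-element-per-class property of $\Ap(B;e)$ for the converse. Your explicit isolation of the identity $\adj(s+e)=\adj(s)\iff\ord(s+e)=\ord(s)+1$ is a clean repackaging of the computation the paper performs inline, but it is not a different argument.
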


\begin{proof}
First assume that $S$ is additive and let $f_i \in \Ap(B;e)$ where $f_i \equiv i \mod e$. By Proposition \ref{th:dmaxchar}, $s_i = f_i + \min\ord(f_i;B^{\mathcal D})e$ is an element of $S$ with $\adj(s_i) = f_i$. For any integer $k$, such that $s_i + ke \in S$, we have $\adj(s_i+ke) = (s_i + ke) - \ord(s_i+ke; S)e = (s_i + ke) - \ord(s_i; S)e -ke = s_i - \ord(s_i; S)e = \adj(s_i) = f_i$. It follows that $\adj(S_i) = f_i$. Since this holds for all $0\le i<e$, we have $\adj(S) = \Ap(B;e)$.

Now assume that $\adj(S) = \Ap(B;e)$ and let $u\in S$. Choose $f \in \Ap(B;e)$ such that $u \equiv f\mod e$. Then $u - \ord(u;S)e = f = (u+e) -\ord(u+e;S)e$, and we have $\ord(u+e;S) = \ord(u;S) + 1$.
\end{proof}

We can now restate Proposition \ref{dmax and maxap} and Corollary \ref{blowup sym} as follows.

\begin{thm}\label{add thm}
Let $S$ be an additive semigroup with multiplicity $e$ and blowup $B$. Then $\dmax(S) = \max\{d(f;B^{\mathcal{D}})\st f\in \maxap(B;e)\}$.
\end{thm}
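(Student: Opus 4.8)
The plan is to obtain this theorem as an immediate consequence of the two results just established, namely Proposition \ref{adj and add} and Proposition \ref{dmax and maxap}. The key observation is that the hypothesis $\adj(S) = \Ap(B;e)$ appearing in Proposition \ref{dmax and maxap} is, by Proposition \ref{adj and add}, exactly a reformulation of the assumption that $S$ is additive. Thus no new factorization-theoretic work is required; the substantive content has already been isolated in the earlier propositions, and this theorem simply splices them together.

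First I would invoke Proposition \ref{adj and add}: since $S$ is assumed to be additive, we immediately have $\adj(S) = \Ap(B;e)$. This is the only place where the additivity hypothesis enters, and it serves to convert a condition phrased in terms of the order function on $S$ (equation \eqref{eq:adddef}) into the set equality relating the adjustment of $S$ to the Ap\'ery set of the blowup.

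Next, with $\adj(S) = \Ap(B;e)$ now available, I would apply Proposition \ref{dmax and maxap} directly to conclude that
$$\dmax(S) = \max\{d(f;B^{\mathcal{D}})\st f\in \maxap(B;e)\},$$
which is the desired formula.

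The main obstacle at this stage is essentially nonexistent: all the genuine difficulty lives in the earlier propositions, and the only thing to verify is that their hypotheses align, which is precisely the force of Proposition \ref{adj and add}. If anything, the point worth emphasizing is conceptual rather than technical, namely that additivity is exactly the condition which removes the bounded-length restriction built into the sets ${\cal R}(u)$, so that $\dmax(S)$ may be computed as an ordinary denumerant in the blowup; Proposition \ref{dmax and maxap} then further restricts attention to the elements of the Ap\'ery set that are maximal with respect to $\pre$.
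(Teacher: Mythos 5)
Your proposal is correct and matches the paper's own treatment exactly: the paper introduces this theorem with the phrase ``We can now restate Proposition \ref{dmax and maxap} and Corollary \ref{blowup sym} as follows,'' giving no separate proof, since Proposition \ref{adj and add} converts the additivity hypothesis into the condition $\adj(S) = \Ap(B;e)$ required by Proposition \ref{dmax and maxap}. Your two-step splice is precisely this argument.
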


\begin{cor}\label{sym cor}
Let $S$ be an additive semigroup with multiplicity $e$ and symmetric blowup $B$. Then $\dmax(S) = d(F(B)+e;B^{\mathcal{D}})$.
\end{cor}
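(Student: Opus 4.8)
The plan is to recognize that Corollary \ref{sym cor} is nothing more than the restatement of Corollary \ref{blowup sym} in which the combinatorial hypothesis $\adj(S) = \Ap(B;e)$ has been replaced by the structural hypothesis that $S$ is additive. All of the substantive work has already been done, so the proof will consist of lining up two previously proved results and invoking them in sequence.

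First I would apply Proposition \ref{adj and add}. Since $S$ is additive with multiplicity $e$ and blowup $B$, that proposition gives the equivalence $\adj(S) = \Ap(B;e)$. This translates the hypothesis of the present statement into precisely the hypothesis required by Corollary \ref{blowup sym}. Next, with $\adj(S) = \Ap(B;e)$ now established and $B$ symmetric by assumption, both hypotheses of Corollary \ref{blowup sym} are satisfied, and that corollary yields $\dmax(S) = d(F(B)+e; B^{\mathcal D})$ directly, which is the claim.

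An equivalent route, which I would mention as an alternative, passes through the already-restated Theorem \ref{add thm}: for additive $S$ we have $\dmax(S) = \max\{d(f;B^{\mathcal D}) \st f \in \maxap(B;e)\}$, and since $B$ is symmetric, part (4) of Proposition \ref{dmax and sym} gives $\maxap(B;e) = \{F(B)+e\}$, so the maximum collapses to the single value $d(F(B)+e; B^{\mathcal D})$. There is no genuine obstacle here; the statement is a bookkeeping combination of earlier results. The only point requiring care is confirming that the hypotheses of the cited results match exactly, in particular that additivity is the faithful translation of $\adj(S)=\Ap(B;e)$ furnished by Proposition \ref{adj and add}, and that the symmetry of $B$ (rather than of $S$) is what feeds Proposition \ref{dmax and sym} and hence Corollary \ref{blowup sym}.
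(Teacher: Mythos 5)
Your proposal is correct and matches the paper's approach exactly: the paper presents this corollary as a restatement of Corollary \ref{blowup sym} whose hypothesis $\adj(S)=\Ap(B;e)$ is supplied by Proposition \ref{adj and add}, which is precisely your main route (and your alternative via Theorem \ref{add thm} and Proposition \ref{dmax and sym}(4) is just the unpacked version of how Corollary \ref{blowup sym} itself was obtained). Nothing is missing.
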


Before looking at some examples, there is an important class of additive semigroups in connection to ring theory for which Corollary \ref{sym cor} applies. 

\begin{defn}
Let $S$ be a semigroup with multiplicity $e$ and $\Ap(S;e) = \{w_0,w_1,\dots,w_{e-1}\}$ where $w_0<w_1<\dots <w_{e-1}$. The semigroup $S$ is called {\em supersymmetric} if $S$ is additive and, in addition, $w_i +w_j = w_{e-1}$ and $\ord(w_i;S) +\ord(w_j;S) = \ord(w_{e-1};S)$ whenever $i+j = e-1$.
\end{defn}

Supersymmetric semigroups are semigroups for which the associated graded ring $gr_m(R) =\bigoplus_{i=0}^\infty m^i/m^{i+1}$ of the corresponding ring $(R,m) = k[[x^e,x^{a_1},x^{a_2},\dots,x^{a_t}]]$ with respect to the maximal ideal $m$ is Gorenstein.

By definition, a supersymmetric semigroup $S$ is additive, and it follows from Proposition \ref{dmax and sym} that $S$ is symmetric. We will show that the blowup semigroup is also symmetric. 

\begin{lem}
Let $S$ be a supersymmetric semigroup. Then the blowup semigroup of $S$ is symmetric.
\end{lem}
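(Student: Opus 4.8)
The plan is to verify the symmetry of $B$ by checking condition (3) of Proposition \ref{dmax and sym} for the Ap\'ery set $\Ap(B;e)$, rather than wrestling with the Frobenius-number definition directly. Write $\Ap(S;e) = \{w_0, w_1, \ldots, w_{e-1}\}$ as in the definition of supersymmetric. The starting point is that a supersymmetric semigroup is additive, so Proposition \ref{adj and add} gives $\adj(S) = \Ap(B;e)$; moreover, the proof of that proposition shows that $\adj$ is constant on each congruence class $S_i$ modulo $e$. Since each $w_i$ is the least element of $S$ in its class and distinct $w_i$ lie in distinct classes, and $\adj$ only subtracts a multiple of $e$, the adjustments $\adj(w_0), \ldots, \adj(w_{e-1})$ are exactly the $e$ elements of $\Ap(B;e)$, one per class.

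The key computation is to pair these adjustments. For $i + j = e-1$, I would invoke the two defining supersymmetric identities $w_i + w_j = w_{e-1}$ and $\ord(w_i;S) + \ord(w_j;S) = \ord(w_{e-1};S)$ to obtain
\[
\adj(w_i) + \adj(w_j) = (w_i + w_j) - (\ord(w_i) + \ord(w_j))e = w_{e-1} - \ord(w_{e-1})e = \adj(w_{e-1}).
\]
Because every adjustment is a nonnegative integer (an element of $B \subseteq \N$), this identity immediately forces $\adj(w_i) \le \adj(w_{e-1})$ for every $i$, so $\adj(w_{e-1})$ is the largest element of $\Ap(B;e)$ (which is in general $F(B)+e$).

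It then remains to read off condition (3). Given any $w \in \Ap(B;e)$, write $w = \adj(w_i)$ and set $j = e-1-i$; the pairing identity gives $\adj(w_{e-1}) - w = \adj(w_j)$, which lies in $\Ap(B;e) \subseteq B$, so $w \pre \adj(w_{e-1})$ in $B$. Thus every element of $\Ap(B;e)$ precedes the maximum element $\adj(w_{e-1})$, which is precisely condition (3) of Proposition \ref{dmax and sym} applied to $B$ with $u = e$; hence $B$ is symmetric.

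I expect the main obstacle to be organizational rather than computational: one must resist trying to verify condition (2) directly, since the map $i \mapsto \adj(w_i)$ need not be order-preserving, so $\adj(w_i)$ need not be the $i$-th smallest element of $\Ap(B;e)$ and the indexed pairing from $S$ may not survive reordering. Routing the argument through the pairing condition (3) sidesteps this entirely. The only other point requiring care is justifying that $\{\adj(w_i)\}$ exhausts $\Ap(B;e)$, which rests on the additivity of $S$ via Proposition \ref{adj and add}.
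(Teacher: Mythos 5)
Your proof is correct and takes essentially the same route as the paper's: both use additivity and Proposition \ref{adj and add} to identify $\adj(\Ap(S;e))$ with $\Ap(B;e)$, combine the two supersymmetric identities to get $\adj(w_i)+\adj(w_j)=\adj(w_{e-1})$ for $i+j=e-1$, and conclude that every element of $\Ap(B;e)$ precedes $\adj(w_{e-1})$ under $\pre$, invoking condition (3) of Proposition \ref{dmax and sym}. Your explicit justification that the adjustments are distinct because they preserve congruence classes modulo $e$ is a slightly more careful rendering of the paper's terse cardinality argument, but the structure of the two proofs is identical.
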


\begin{proof}
Since $S$ is additive, by Proposition \ref{adj and add}, $\adj(\Ap(S;e)) \subset \adj(S) = \Ap(B;e)$. These sets have the same cardinality, so, in fact, $\adj(\Ap(S;e)) = \Ap(B;e)$. 

Consider $f\in \Ap(B;e)$ where $e$ is the multiplicity of $S$. From above, we know that $f = w_i -\ord(w_i)e$ for some $w_i \in \Ap(S;e)$. Since $S$ is supersymmetric, we have $[w_i -\ord(w_i)e] + [w_j -\ord(w_j)e] = w_{e-1} -\ord(w_{e-1})e$ where $i+j=e-1$. Furthermore $w_j -\ord(w_j)e = \adj(w_j)$ and $w_{e-1} -\ord(w_{e-1})e=\adj(w_{e-1})$ are both in $\Ap(B;e)$. Thus $f \pre adj(w_{e-1})$ for all $f\in \Ap(B;e)$. This forces $\adj(w_{e-1})$ to be the largest element of $\Ap(B;e)$, and by Proposition \ref{dmax and sym}, $B$ is symmetric.
\end{proof}

\begin{thm}
Let $S$ be a supersymmetric semigroup with multiplicity $e$ and blowup $B$. Then $\dmax(S) =  d(F(B)+e;B^{\mathcal{D}})$.
\end{thm}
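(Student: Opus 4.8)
The plan is to recognize that this theorem is a direct consequence of results already established in this section, so the work is entirely in verifying that the hypotheses of Corollary \ref{sym cor} are met. Recall that Corollary \ref{sym cor} asserts that for any additive semigroup $S$ with symmetric blowup $B$, we have $\dmax(S) = d(F(B)+e;B^{\mathcal{D}})$. Thus it suffices to check two things: that $S$ is additive, and that its blowup $B$ is symmetric.

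First I would observe that $S$ is additive. This requires no argument, since by the definition of \emph{supersymmetric} a semigroup is supersymmetric only if it is additive; indeed the text following that definition explicitly notes this. Second, I would invoke the Lemma immediately preceding this theorem, which establishes precisely that the blowup semigroup of any supersymmetric semigroup is symmetric. With both hypotheses verified, the conclusion $\dmax(S) = d(F(B)+e;B^{\mathcal{D}})$ follows at once from Corollary \ref{sym cor}.

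There is essentially no obstacle to overcome, since the substantive content has already been handled: the symmetry of the blowup was the genuine difficulty, and that was dispatched in the preceding Lemma using the supersymmetry condition $w_i + w_j = w_{e-1}$ together with $\ord(w_i) + \ord(w_j) = \ord(w_{e-1})$ to show that $\adj(w_{e-1})$ is the unique maximal element of $\Ap(B;e)$ under $\pre$. If anything, the only point warranting a sentence of care is making explicit the chain of implications \emph{supersymmetric} $\Rightarrow$ \emph{additive} and \emph{supersymmetric} $\Rightarrow$ \emph{symmetric blowup}, so that the reader sees unambiguously that the two separate hypotheses of Corollary \ref{sym cor} are simultaneously satisfied. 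The proof can therefore be stated in two or three sentences.
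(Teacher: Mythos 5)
Your proposal is correct and matches the paper's intended argument exactly: the paper states this theorem without proof precisely because, as you observe, supersymmetry gives additivity by definition, the preceding Lemma gives symmetry of the blowup, and Corollary \ref{sym cor} then yields the conclusion immediately.
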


We conclude with two more results.

\begin{prop}
If $S$ is additive and $B=\mathbb N$, or equivalently, the difference between the two smallest minimal generators is 1, then $\dmax(S)$ is the denumerant of $e-1$ in $B$ with respect to $\mathcal D$. 
\end{prop}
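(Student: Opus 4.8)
The plan is to reduce this entirely to the symmetric-blowup machinery already established, since the hypotheses hand us an additive semigroup whose blowup is $\mathbb{N}$, and $\mathbb{N}$ is the most symmetric blowup possible. First I would record that $B = \mathbb{N}$ is symmetric: this is the observation made immediately after the definition of symmetric in the excerpt. Thus both hypotheses of Corollary \ref{sym cor} are met — $S$ is additive and its blowup $B$ is symmetric — so I can apply it directly to obtain $\dmax(S) = d(F(B)+e;B^{\mathcal D})$.

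The only remaining computation is to evaluate $F(B)+e$. Since $B = \mathbb{N}$ contains every nonnegative integer, the largest integer not belonging to $B$ is $-1$, so $F(B) = -1$ and therefore $F(B)+e = e-1$. Substituting this into the formula from Corollary \ref{sym cor} yields $\dmax(S) = d(e-1;B^{\mathcal D})$, which is exactly the asserted value. (As a sanity check consistent with the earlier results, $\Ap(B;e) = \{0,1,\dots,e-1\}$ when $B = \mathbb{N}$, and by Proposition \ref{dmax and sym} the symmetry of $B$ collapses $\maxap(B;e)$ to the single element $F(B)+e = e-1$, so Theorem \ref{add thm} gives the same conclusion.)

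Separately, I would justify the parenthetical equivalence in the statement. Writing $d_1 = a_1 - e$ for the difference of the two smallest minimal generators, the claim is that $B = \mathbb{N}$ if and only if $d_1 = 1$. Since $B = \mathbb{N}$ is equivalent to $1 \in B$, I would examine a $B^{\mathcal D}$-factorization $1 = c_0 e + \sum_{i=1}^t c_i d_i$ with $c_i \ge 0$: as $e \ge 2$ and each $d_i \ge 1$ with $d_1 < d_2 < \cdots < d_t$, the only possibility is $c_0 = 0$ and a single $d_i$ equal to $1$, forcing $d_1 = 1$. Conversely, if $d_1 = 1$ then $1 \in \mathcal D$ is a generator of $B$, so $B = \mathbb{N}$.

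There is essentially no obstacle here, since the statement is a specialization of Corollary \ref{sym cor}; the only things requiring care are correctly identifying $\mathbb{N}$ as symmetric with Frobenius number $-1$ (so that $F(B)+e$ evaluates to $e-1$ rather than some other offset), and the short generator-divisibility argument establishing the stated equivalence between $B = \mathbb{N}$ and the two smallest minimal generators differing by $1$.
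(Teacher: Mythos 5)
Your proposal is correct and matches the paper's own proof, which likewise deduces the result directly from Corollary \ref{sym cor} by noting that $B=\mathbb{N}$ is symmetric with $F(B)=-1$, so that $F(B)+e=e-1$. Your additional verification of the parenthetical equivalence ($B=\mathbb{N}$ if and only if $d_1=1$) is sound extra detail that the paper leaves unproved, but the core argument is the same.
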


\begin{proof}
This follows from Corollary \ref{sym cor} since $B$ is symmetric and $F(B) = -1$.
\end{proof}

\begin{prop}\label{ex3}
If $S$ is generated by an arithmetic sequence, i.e., it is of the form $S=\langle e,e+d,e+2d,\dots e+t d\rangle$ where $\gcd(e,d)=1$, then $\dmax(S)$ is the number of integer partitions of $e-1$ using the numbers $1,2,\dots,t$. 
\end{prop}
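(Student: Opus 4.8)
The plan is to reduce the computation to a single denumerant in the blowup, by showing that $S$ is additive with symmetric blowup, and then to count the relevant $B^{\mathcal D}$-factorizations combinatorially. First I would identify the blowup: since $(e+id)-e = id$, we have $B = \langle e, d, 2d, \ldots, td\rangle = \langle e, d\rangle$ (because $2d,\ldots,td\in\langle d\rangle$), while the generating set we track is $\mathcal D = \{e, d, 2d, \ldots, td\}$. Since $\gcd(e,d)=1$ and $\nu(B)\le 2$, the semigroup $B$ has embedding dimension at most two and is therefore symmetric, with the classical Frobenius number $F(B) = ed - e - d$. Consequently the target of Corollary \ref{sym cor} would be $d(F(B)+e; B^{\mathcal D}) = d((e-1)d; B^{\mathcal D})$, provided I can verify that $S$ is additive.

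The heart of the argument is establishing additivity, which by Proposition \ref{adj and add} amounts to $\adj(S) = \Ap(B;e)$. I would first compute the order function explicitly. A factorization $n = \sum_{i=0}^t c_i(e+id)$ of length $L = \sum c_i$ records $n = Le + Kd$ with $K = \sum_i i c_i$ satisfying $0\le K\le tL$; conversely every pair $(L,K)$ in this range with $Le+Kd=n$ is realized. Since raising $L$ only lowers $K = (n-Le)/d$, the constraint $K\le tL$ never obstructs maximizing $L$, so $\ord(n;S)$ is the largest $L$ with $Le\le n$ and $d\mid(n-Le)$. The divisibility condition fixes $L\bmod d$, so consecutive admissible values of $L$ differ by $d$; maximality then forces $0\le n-\ord(n)e < de$. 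Hence $\adj(n) = n-\ord(n)e$ is a multiple of $d$ lying in $\{0, d, \ldots, (e-1)d\} = \Ap(B;e)$, so $\adj(S)\subseteq\Ap(B;e)$. The reverse inclusion follows from statement {\it 1.}\ of Proposition \ref{th:nvsnplusa}, giving $\adj(S) = \Ap(B;e)$ and additivity.

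With additivity and symmetry in hand, Corollary \ref{sym cor} yields $\dmax(S) = d((e-1)d; B^{\mathcal D})$, and it remains to count the $B^{\mathcal D}$-factorizations $(e-1)d = c_0 e + \sum_{i=1}^t c_i(id)$. Writing $W = \sum_{i=1}^t i c_i$, the equation forces $c_0 e = (e-1-W)d$; since $\gcd(e,d)=1$ this requires $W\equiv -1 \pmod e$, while $c_0\ge 0$ forces $W\le e-1$, so $W = e-1$ and $c_0 = 0$. Thus the factorizations are exactly the tuples $(c_1,\ldots,c_t)$ with $\sum_{i=1}^t i c_i = e-1$, which biject with integer partitions of $e-1$ into parts from $\{1,2,\ldots,t\}$ (part $i$ occurring $c_i$ times). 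This produces the claimed count.

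The main obstacle I anticipate is the additivity step: one must handle the order function carefully enough to pin down $\adj(n)$, in particular justifying that the weight bound $K\le tL$ is inactive when the length is maximized, and that maximality of $L$ forces the remainder $n-\ord(n)e$ below $de$. The symmetry, Frobenius-number, and final partition-counting steps are then routine.
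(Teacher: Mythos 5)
Your proof is correct, and it follows the paper's overall skeleton---reduce via Corollary \ref{sym cor} to the single denumerant $d\bigl((e-1)d;B^{\mathcal D}\bigr)$ and then count partitions---but it differs in one substantive respect: where the paper simply cites \cite{BF,MT} for the additivity of $S$, you prove it directly. Your argument is sound: a factorization of $n$ of length $L$ amounts to a representation $n=Le+Kd$ with $0\le K\le tL$; since $\gcd(e,d)=1$, the integer solutions move along $(L,K)\mapsto(L+d,K-e)$, so the upper constraint $K\le tL$ stays slack as $L$ increases, and the maximal $L$ leaves $0\le K\le e-1$, whence $\adj(n)=Kd\in\{0,d,\dots,(e-1)d\}=\Ap(B;e)$, with the reverse inclusion supplied by Proposition \ref{th:nvsnplusa}. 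This makes the proposition self-contained at the cost of bookkeeping the citation avoids. You are in fact more careful than the paper at the final counting step: the paper passes from $\mathcal D=\{e,d,2d,\dots,td\}$ to $\{d,2d,\dots,td\}$ without comment, whereas you verify that the coefficient of $e$ in any $B^{\mathcal D}$-factorization of $(e-1)d$ must vanish (from $c_0e=(e-1-W)d$, coprimality, and $0\le W\le e-1$ forcing $W=e-1$ and $c_0=0$), which is exactly what justifies ``factoring out $d$.'' The only nit: your converse realizability claim---that every pair $(L,K)$ with $0\le K\le tL$ and $Le+Kd=n$ comes from an actual factorization---deserves a one-line justification (sums of $L$ integers drawn from $\{0,1,\dots,t\}$ realize every total in $[0,tL]$), but this is standard and does not affect correctness.
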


\begin{proof}
First we note that by \cite{BF,MT}, $S$ is additive. Moreover, $B=\langle e,d,2d,\dots,td\rangle = \langle e,d\rangle$ is symmetric with $F(B) = ed-e-d$, see \cite{S}. Thus by Corollary \ref{sym cor}, $\dmax(S)$ is the denumerant of $F(B)+e = ed-e-d +e = (e-1)d$ with respect to the generating set $\mathcal D=\{d,2d,\dots,td\}$. By factoring out $d$, we obtain our result.
\end{proof}

From Proposition \ref{ex3}, we see that if $S$ has embedding dimension 3 with multiplicity $e$, and is generated by an arithmetic sequence, then $\dmax(S)$ is the number of integer partitions of $e-1$ in which all parts are either 1 or 2. This is given by the formula $$\left\lfloor \frac{e-1}{2}+1\right\rfloor = \left\lceil \frac{e}{2}\right\rceil.$$  Thus, for this particular case, we obtain the formula given in Equation 1 in the Introduction using different results than those found in \cite{BHJ}. 

If $S$ has embedding dimension 4 with multiplicity $e$, and is generated by an arithmetic sequence, then $\dmax(S)$ is the number of integer partitions of $e-1$ in which all parts are either 1, 2, or 3. This is given by the formula $$\left[ \frac{(e+2)^2}{12} \right],$$ where $[x]$ denotes the integer nearest to $x$, see \cite{H}. Hence, we obtain a formula of the type given in \cite{BHJ} that is valid for embedding dimension 4, albeit, for only a special class of semigroups.

\end{document}